    \newcommand{\href}[2]{#2}
\newtheorem{theorem}{Theorem}[section]
\newtheorem{lemma}[theorem]{Lemma}
\newtheorem{assumption}[theorem]{Assumption}
\numberwithin{equation}{section}  
  \newcounter{mnote}
  \let\oldmarginpar\marginpar
    \renewcommand\marginpar[1]{\-\oldmarginpar[\raggedleft\footnotesize #1]%
    {\raggedright\footnotesize #1}}
\newenvironment{enumerateX}
{\begin{list}{\arabic{enumi})}{
\usecounter{enumi}
\leftmargin 2.5em\topsep 0.5em\itemsep -0.0em\labelwidth 50.0em}}
{\end{list}}
\newenvironment{itemizeX}
{\begin{list}{\labelitemi}{
\leftmargin 2.5em\topsep 0.5em\itemsep -0.0em\labelwidth 50.0em}}
{\end{list}}
\definecolor{myblue}{rgb}{0.2,0.2,0.7}
\definecolor{mygreen}{rgb}{0,0.6,0}
\definecolor{mycyan}{rgb}{0,0.6,0.6}
\definecolor{myred}{rgb}{0.9,0.2,0.2}
\definecolor{mymagenta}{rgb}{0.9,0.2,0.9}
\definecolor{mywhite}{rgb}{1.0,1.0,1.0}
\definecolor{myblack}{rgb}{0.0,0.0,0.0}
\newcommand{\beq}{\begin{equation}}
\newcommand{\eeq}{\end{equation}}
\newcommand{\beqa}{\begin{eqnarray}}
\newcommand{\eeqa}{\end{eqnarray}}
\renewcommand{\div}{{\operatorname{div}}}
\newcommand{\eps}{\varepsilon}
\newcommand{\T}{{\mathbb T}}
\newcommand{\PP}{{\mathbb P}}       
\newcommand{\R}{{\mathbb R}}       
\newcommand{\V}{{\mathbb V}}
\newcommand{\cA}{{\mathcal A}}
\newcommand{\cE}{{\mathcal E}}
\newcommand{\cI}{{\mathcal I}}
\newcommand{\cL}{{\mathcal L}}
\newcommand{\cM}{{\mathcal M}}
\newcommand{\cR}{{\mathcal R}}
\newcommand{\cT}{{\mathcal T}}
\newcommand{\half}{{\scriptstyle 1/2}}
\DeclareMathAlphabet{\mathpzc}{OT1}{pzc}{m}{it}
\newcommand{\bit}{\begin{itemize}}
\newcommand{\eit}{\end{itemize}}
\newcommand{\iic}{\emph}
\newcommand{\f}{\frac}
\newcommand{\oor}{\text{ or }}
\newcommand{\an}{\text{ and }}
\newcommand{\where}{\text{\ where }}
\newcommand{\st}{\text{\ such that }}
\newcommand{\tforall}{\text{ for all }}
\newcommand{\rest}{\big|}
\newcommand{\loc}{_{\text{loc}}}
\newcommand{\forma}{a(\, \cdot \, , \, \cdot \,)}
\newcommand{\dualforma}{a^\ast(\, \cdot \, , \, \cdot \,)}
\newcommand{\essinf}{\text{ess inf\,}}
\newcommand{\essup}{\text{ess sup\,}}
\newcommand{\osc}{\text{osc}}
\newcommand{\tosc}{\text{\em osc}}
\newcommand{\grad}{\nabla} 
\newcommand{\goto}{\rightarrow}
\newcommand{\lbb}{\llbracket}
\newcommand{\rbb}{\rrbracket}
\newcommand{\norm}[1]{\ensuremath{\lVert{#1} \rVert}}
\newcommand{\nr}[1]{\norm{#1}} 
\newcommand{\nrse}[1]{| \! | \! | {#1} | \! | \! |} 
\newcommand{\pa}{\partial}
\newcommand{\vareps}{ \varepsilon }
\newenvironment{qqq}{\begin{eqnarray*}\begin{split}\end{split}}{\end{eqnarray*}}
\newcommand\bqq{\begin{qqq}}
\newcommand\eqq{\end{qqq}}
\newenvironment{dsub}[2]{  \begin{array}{ccccccccccccccc}{#1} \\ {#2}}{\end{array} }
\newcommand\bml{\begin{dsub}}
\newcommand\eml{\end{dsub}}
\newenvironment{mat}{\left(\begin{array}{ccccccccccccccc}}{\end{array}\right)}
\newcommand\bcm{\begin{mat}}
\newcommand\ecm{\end{mat}}
\newenvironment{rmat}{\left(\begin{array}{rrrrrrrrrrrrr}}{\end{array}\right)}
\newcommand\brm{\begin{rmat}}
\newcommand\erm{\end{rmat}}
\definecolor{blue}{rgb}{0.2,0.2,0.7}
\definecolor{red}{rgb}{0.7,0.3,0.1}
\definecolor{cyan}{rgb}{0.2,0.5,0.6}
\begin{document}

\title[GOAFEM for Nonsymmetric Problems]
      {Convergence of Goal-Oriented Adaptive Finite Element Methods
       for Nonsymmetric Problems}

\author[M. Holst]{Michael Holst}
\email{mholst@math.ucsd.edu}

\author[S. Pollock]{Sara Pollock}
\email{snpolloc@math.ucsd.edu}

\address{Department of Mathematics\\
         University of California San Diego\\ 
         La Jolla CA 92093}

\thanks{MH was supported in part by NSF Awards~0715146 and 0915220,
and by DOD/DTRA Award HDTRA-09-1-0036.}
\thanks{SP was supported in part by NSF Award~0715146.}

\date{\today}

\keywords{Adaptive methods, elliptic equations,
non-symmetric problems, quasi-orthogonality, duality, 
approximation theory, residual-based error estimator,
convergence, contraction, optimality, 
{\em a priori} estimates, {\em a posteriori} estimates,
 goal oriented}

\begin{abstract}
In this article we develop convergence theory for a class of 
goal-oriented adaptive finite element algorithms for second 
order nonsymmetric linear elliptic equations.
In particular, we establish contraction results 
for a method of this type for Dirichlet problems
involving the elliptic operator 
$
\cL u = \grad \cdot (A \grad u) - b \cdot \grad u - cu,
$
with $A$ Lipschitz, almost-everywhere symmetric positive definite,
with $b$ divergence-free, and with $c \ge 0$.
We first describe the problem class and review some standard
facts concerning conforming finite element discretization and
error-estimate-driven adaptive finite element methods (AFEM).
We then describe a goal-oriented variation of standard AFEM (GOAFEM).
Following the recent work of Mommer and Stevenson for symmetric problems, 
we establish contraction of GOAFEM and convergence in the sense of the goal function.
Our analysis approach is signficantly different from that of Mommer and
Stevenson, combining the recent
contraction frameworks developed by Cascon, Kreuzer, Nochetto and Siebert;
by Nochetto, Siebert and Veeser;
and by Holst, Tsogtgerel and Zhu.
We include numerical results demonstrating performance of our method with standard goal-oriented strategies on a convection problem .
\end{abstract}

\maketitle

\vspace*{-1.2cm}
{\scriptsize
\tableofcontents
}

\section{Introduction}
\label{sec:intro1}

In this article we develop convergence theory for a class of 
goal-oriented adaptive finite element methods for 
second order nonsymmetric linear elliptic equations.
In particular, we report contraction results 
for a method of this type for the problem 
\begin{align}
-\grad \cdot (A \grad u) + b \cdot \grad u + cu &= f,
\quad \mbox{ in } \Omega, \label{eqn:strong-eqn1} \\
u &= 0, \quad \mbox{ on } \partial \Omega, \label{eqn:strong-b1c}
\end{align}
with $\Omega \subset \R^d$  a polyhedral domain, $d = 2 \oor 3$,
with $A$ Lipschitz, almost-everywhere symmetric positive definite (SPD),
with $b$ divergence-free, and with $c \ge 0$.
The standard weak formulation of this problem reads:  Find $u \in H^1_0(\Omega)$ such that
\begin{align}
a(u,v) &= f(v), \quad \forall v \in H_0^1(\Omega),
\label{primal_proble1m}
\end{align}
where
\begin{equation}
a(u,v) = \int_{\Omega} A \nabla u \cdot \nabla v
          + b \cdot \nabla u v + c u v ~dx,
\qquad
f(v) = \int_{\Omega} f v ~dx.
\label{primal_form1}
\end{equation}

Our approach is to first describe the problem class in some detail,
and  review some standard facts concerning conforming finite element 
discretization and error-estimate-driven adaptive finite element methods (AFEM).
We will then describe a goal-oriented variation of standard AFEM (GOAFEM).
Following the recent work of Mommer and Stevenson~\cite{MoSt09} for 
symmetric problems, we  establish contraction of GOAFEM and convergence in the sense of the goal function.
Our analysis approach is signficantly different from that of Mommer and
Stevenson~\cite{MoSt09}, combining the recent contraction frameworks 
of Cascon, Kreuzer, Nochetto and Siebert ~\cite{CKNS08},
of Nochetto, Siebert and Veeser~\cite{NSV09},
and of Holst, Tsogtgerel and Zhu~\cite{HTZ09a}.
We also give some numerical results comparing our goal-oriented method both to the one presented in~\cite{MoSt09} and the dual weighted residual (DWR) method as in~\cite{BaRa03,Becker.R;Rannacher.R1996a,EHL02,Giles.M;Suli.E2003,Gratsch.T;Bathe.K2005,EHM01}, among others.  Unlike the existing literature on the DWR method, we prove strong convergence of our goal-oriented method.   We establish contraction of the goal error in terms of the energy norm errors and error estimators of the primal and dual problems, and indicate how this implies optimality in terms of the global error.   Controlling this overestimate of the error shows convergence of the method to the goal, although not optimality in this sense.  Our numerical results demonstrate, however, that the algorithm presented here performs at least comparably to and in some cases better than DWR and the method in~\cite{MoSt09} on a variety of convection dominated linear problems.

The goal-oriented problem concerns achieving a target quality in a given linear functional $g \colon H^1_0(\Omega) \to \R$ of the weak solution $u \in H_0^1(\Omega)$ of the problem~\eqref{primal_proble1m}.
For example, $g(u) =\int_\Omega \f 1 {|\omega|}\chi_\omega u$, the average value of $u$ over some subdomain $\omega \subset \Omega$. By writing down the adjoint operator, $a^\ast(z,v) = a(v,z)$, we consider the \emph{adjoint} or \emph{dual} problem: find $z \in H_0^1(\Omega)$ such that $a^\ast(z,v) = g(v), \tforall v \in H_0^1(\Omega)$.
It has been shown for the symmetric form ($b=0$) of problem~\eqref{eqn:strong-eqn1}--\eqref{eqn:strong-b1c} with piecewise constant SPD diffusion cofficient $A$ (and with $c=0$), that by solving the \emph{primal} and \emph{dual} problems simultaneously, one may converge to an approximation of $g(u)$ faster than by approximating $u$ and then $g(u)$, when forcing contraction in only the primal problem~\cite{MoSt09}.
We will follow the same general approach to establish similar goal-oriented AFEM results for nonsymmetric problems.
In order to handle nonsymmetry, we will follow the technical approach in~\cite{MeNo05,CKNS08,HTZ09a}, and rely largely on establishing quasi-orthogonality.
Contraction results are established in~\cite{MeNo05,CKNS08} for~\eqref{eqn:strong-eqn1}--\eqref{eqn:strong-b1c} in the case that $A$ is SPD, Lipschitz or piecewise Lipschitz, $b$ is divergence-free, and $c \ge 0$.
In~\cite{HTZ09a}, quasi-orthogonality is used as the basis for establishing contraction of AFEM for two classes of nonlinear problems.
As in these earlier efforts, relying on quasi-orthogonality will require that we assume that the initial mesh is sufficiently fine, and that the solution to the dual problem $a^\ast(w,v) = g(v), ~ g \in L_2(\Omega)$ is sufficiently smooth, e.g. in $H^2_{\loc}(\Omega)$.

Following~\cite{HTZ09a}, the contraction argument developed in this paper will follow from first establishing three preliminary results for two successive AFEM approximations $u_1$ and $u_2$, and then applying the D\"orfler marking strategy:
\begin{enumerateX}

\item Quasi-orthogonality (\S\ref{quasi_orth1o}): There exists $ \Lambda > 1$ such that 
\[
\nrse{u - u_2}^2 \le  \Lambda \nrse{u - u_1}^2 - \nrse{u_2 - u_1}^2.
\]

\item Error estimator as upper bound on error (\S\ref{subsec:uppe1r}): There exists $C_1 > 0$ such that
\[
\nrse{u - u_k}^2 \le C_1 \eta_k^2(u_k, \cT_k), \quad k = 1,2.
\]
\item Estimator reduction (\S\ref{subsec:estim_redu1c}): For $\cM$ the marked set that takes refinement $\cT_1 \goto \cT_2$, for positive constants $\lambda < 1 \an \Lambda_1$ and any $\delta > 0$ 
\[
\eta_2^2(v_2,\cT_2) \le (1 + \delta) \{ \eta_1^2(v_1 , \cT_1) - \lambda \eta_1^2(v_1, \cM \} + 
(1 + \delta^{-1}) \Lambda_1 \eta_0^2 \nrse{v_2 - v_1}.
\]

\end{enumerateX}
The marking strategy used is the original D\" orfler strategy; elements are marked for refinement based on indicators alone.  The marked set $\cM$ must satisfy
\[
\sum_{T \in \cM} \eta_k^2(u_k,T) \ge \theta^2 \eta_k^2(u_k, \cT_k).
\]
In this goal-oriented method, a second marked set is chosen based on an error indicator for the dual problem associated with the given goal functional, and the union of the two marked sets is then used for refinement.
A main advantage of the approach in~\cite{CKNS08,HTZ09a} is that it does not require an interior node property.   This allows us to establish the necessary results for contraction without taking full refinements of the mesh at each iteration. This improvement follows from the use of the local perturbation estimate or local Lipschitz property rather than the estimator as lower bound on error. We use the standard lower bound estimate as found in~\cite{MeNo05} for optimality arguments in the second part of the paper concerning quasi-optimality of the method.

There are three main notions of error used throughout this paper.
The energy error $\nrse{u - u_k}$, the quasi-error, and the total-error.
The \emph{energy error} is defined by the symmetric part of the bilinear form that arises from the given differential operator in~\eqref{primal_proble1m}.
The \emph{quasi-error} is the $l_2$ sum of the energy-error and scaled error estimator
\[
Q_k(u_k, \cT_k) \coloneqq (\nrse{u - u_k}^2 + \gamma \eta_k^2)^{1/2},
\]
and this is the quantity that is reduced at each iteration of the algorithm.  In  \S\ref{sec:contraction_thm1s} the quasi-error  is shown to satisfy
\[
\nrse{u - u_{k+1}}^2 + \gamma \eta_{k+1}^2 \le \alpha^2\left(  \nrse{u - u_k}^2 + \gamma \eta_k^2 \right), ~ \alpha < 1.
\]
 The \emph{total error} includes the oscillation term rather than the estimator
\[
E_k(u_k, \cT_k) \coloneqq (\nrse{u-u_k}^2 + \osc_k^2)^{1/2}.
\]
The oscillation term captures the higher-frequency oscillations in the residual missed by the averaging of the finite element method.
While the quasi-error is the focus of the contraction arguments, the total error is used in our discussion of complexity analysis.

Throughout this paper, the constant $C$ will denote a generic but global constant that may depend on the data and the condition of the initial mesh $\cT_0$, and may change as an argument proceeds, without danger of confusion.

{\bf\em Outline of the paper.}
The remainder of the paper is structured as follows.
In \S\ref{sec:setu1p},
we first describe the problem class and review some standard
facts concerning conforming finite element discretization and
error-estimate-driven adaptive finite element methods (AFEM).
In \S\ref{sec:goafe1m},
we then describe a goal-oriented variation of the standard
approach to AFEM (GOAFEM).
Following the recent work of Mommer and Stevenson for symmetric problems, 
in \S\ref{sec:contraction_thm1s}
we establish contraction of goal-oriented AFEM.
We also then show convergence in \S\ref{subsec:goalConvergenc1e}
in the sense of the goal function.
Our analysis combines the recent
contraction frameworks developed in~\cite{CKNS08,NSV09,HTZ09a}, applied now to the goal oriented problem.
In \S\ref{sec:numerics}, we present some numerical experiments comparing the method presented here with two standard goal oriented strategies.
We recap the results in \S\ref{sec:conc}, and point out some remaining
open problems.

\section{Problem class, discretization, goal-oriented AFEM}
   \label{sec:setu1p}

\subsection{Problem class, weak formulation, spaces and norms}
\label{subsec:problem_dat1a}
\label{subsec:norm1s}
\label{subsec:N1norm1s}

Consider the nonsymmetric problem~\eqref{primal_proble1m}, 
where as in~\eqref{primal_form1} we have
\[
a(u, v) = \langle A \grad u , \grad v \rangle + \langle b \cdot \grad u, v \rangle + \langle cu, v\rangle.
\]
Here we have introduced the notation
$\langle \cdot, \cdot \rangle$ for the $L_2$ inner-product over 
$\Omega \subset \R^d$.  
The adjoint or dual problem is: Find $z \in H_0^1(\Omega) \st$
\begin{equation}\label{dual_proble1m}
a^\ast(z,v) = g(v) ~\tforall v \in H_0^1(\Omega)
\end{equation}
where $\dualforma$ is the formal adjoint of $\forma$,
and where the functional is defined through
\begin{equation}\label{dual_functiona1l}
g(u)= \int_{\Omega} gu ~dx,
\end{equation}
for some given $g \in L_2(\Omega)$.
We will make the following assumptions on the data:
\begin{assumption}[Problem data]\label{data_assumption1s}
The problem data $D= (A,b,c,f)$ and dual problem data $D^\ast = (A,-b,c,g)$ satisfy
\begin{enumerateX}
\item $A: \overline \Omega \goto \R^{d \times d}$,
      Lipschitz, and a.e. symmetric positive-definite:
\begin{align}\label{a1epd1}
\essinf_{x \in \Omega} \lambda_{\text{min}}(A(x)) &= \mu_0 > 0,\\
\label{a1epd2}
\essup_{x \in \Omega} \lambda_{\text{max}}(A(x)) &= \mu_1 < \infty.
\end{align}
\item $b: \overline \Omega \goto \R^d$, with $b_k \in L_\infty(\Omega)$ ,
       and $b$ divergence-free.
\item $c: \overline \Omega \goto \R$, with $c \in L_\infty(\Omega)$, 
      and $c(x) \ge 0 \tforall x \in \Omega$.
\item $f, g \in L_2(\Omega)$.
\end{enumerateX}
\end{assumption}

The native norm is the Sobolev $H^1$ norm given by
\begin{equation}\label{h1_nor1m}
\nr v_{H^1}^2 =  \langle \grad v, \grad v \rangle + \langle v,v \rangle.
\end{equation}
The $L_p$ norm of a vector valued function $v$ over domain $\omega$ is defined here as the $l_2$ norm of the $L_p(\omega)$ norm of each component
\begin{align}\label{L1p_norm_def}
\nr v_{L_p(\omega)} &=  \left(  \sum_{j = 1}^d \left( \int_\omega v_j^p \right)^{2/p} \right)^{1/2},~p = 1, 2, \ldots \nonumber \\
\nr v_{L_\infty(\omega)} &=   \left(  \sum_{j = 1}^d \left( \underset{\omega }\essup  v_j \right)^{2} \right)^{1/2} .
\end{align}
Similarly, the $L_p$ norm of a matrix valued function $M$ over domain $\omega$ is defined as the Frobenius norm of the $L_p(\omega)$ norm of each component
\begin{align}\label{Lp_mat_norm_def}
\nr M_{L_p(\omega)} &=  \left(  \sum_{i,j = 1}^d \left( \int_\omega M_{ij}^p \right)^{2/p} \right)^{1/2} ,  \quad p = 1, 2, \ldots \nonumber \\
\nr M_{L_\infty(\omega)} &=   \left(  \sum_{ij = 1}^d \left( \underset{\omega }\essup  M_{ij} \right)^{2} \right)^{1/2}.
\end{align}
We note that one could employ other equivalent discrete $l_p$ norms in the definitions \eqref{L1p_norm_def} and \eqref{Lp_mat_norm_def}, however this choice simplifies the analysis.

Continuity of $\forma$ follows from the H\"older inequality, and bounding the $L_2$ norm of the function and its gradient by the $H^1$ norm
\begin{align}\label{continuit1y} 
a(u, v)  \le \left( \mu_1 + \nr b_{L_\infty} + \nr c_{L_\infty} \right) \nr u_{H^1} \nr v_{H^1}
 = M_c \nr{u}_{H^1} \nr v_{H^1}.
\end{align}
Coercivity follows from the Poincar\'e inequality with constant $C_\Omega$ and the divergence-free condition
\begin{align}\label{coerciv1e}
a(v,v) & \ge \mu_0|v|_{H^1}^2 
 \ge C_\Omega \mu_0 \nr v_{H^1}^2 = m_{\cE}^2\nr v_{H^1}^2,
\end{align}
where the coercivity constant  $m_\cE^2 \coloneqq C_\Omega \mu_o$. Continuity and coercivity imply existence and uniqueness of the solution by the  Lax-Milgram Theorem~\cite{GT77}.
The adjoint operator $a^\ast(~,~)$ is given by
\[
 a^\ast(v, u) \coloneqq a(u,v), \qquad u,v \in H^1_0(\Omega).
\]
Integration by parts on the convection term and the divergence-free condition imply
\begin{equation}\label{dualop}
a^\ast(z,v) \coloneqq \langle A \grad z, \grad v \rangle - \langle b \cdot \grad z,v \rangle + \langle cz ,v\rangle.
\end{equation}
Define the energy semi-norm by
\begin{equation}\label{energy_seminorm}
\nrse v^2 \coloneqq a(v,v).
\end{equation}
Non-negativity follows directly from the coercivity estimate (\ref{coerciv1e})
\begin{equation}\label{explicit_coerciv1e}
\nrse{v}^2 \ge m_\cE^2 \nr{v}_{H^1}^2,
\end{equation}
which establishes the energy semi-norm as a norm.
Putting this together with the reverse inequality
\begin{equation}\label{norm_continuit1y}
\nrse v^2 \le \mu_1 |\grad v|_{L_2}^2 + \nr c_{L_\infty}\nr{v}_{L_2}^2  \implies \nrse v \le M_\cE \nr v_{H^1},
\end{equation}
establishes the equivalence between the native and energy norms with the constant
$M_\cE = (\mu_1 +  \nr c_{L_\infty} )^{\half}.$

\subsection{Finite element approximation}\label{subsec:mes1h}

We employ a standard conforming piecewise polynomial finite element
approximation below.
\begin{assumption}[Finite element mesh] We make the following assumptions on the underlying simplex mesh:
\begin{enumerateX}
\item The initial mesh $\cT_0$ is conforming.
\item The mesh is refined by newest vertex bisection~\cite{BDD04}, \cite{MoSt09} at each iteration.  
\item The initial mesh $\cT_0$ is sufficiently fine.  In particular, it satisfies (\ref{fine_init_mes1h}). 
\end{enumerateX}
\end{assumption}

Based on assumptions~\ref{mesh_assumption1s}\label{mesh_assumption1s} we have the following mesh constants.
\begin{enumerateX}
\item Define
\begin{equation}\label{hde1f}
h_\cT \coloneqq \max_{T \in \cT} h_T, \quad \where h_T = |T|^{1/d}.
\end{equation}
In particular, $h_0$ is the initial mesh diameter.
\item Define the mesh constant $\gamma_N = 2 \gamma_r$ where
$
\gamma_r = \f {h_{0}}{h_{min}}$ and $h_{min} = \min_{T \in \cT_0} h_T
$
then for any two elements $T, \tilde T$ in the same generation
$
h_T \le \gamma_r h_{\tilde T}$
and as neighboring elements may differ by at most one generation for any two neighboring elements $T \an T'$
\begin{equation}\label{neighborRa1t}
h_{T} \le 2 \gamma_r h_{T'} = \gamma_N h_{T'}.
\end{equation}
\item The minimal angle condition satisfied by newest vertex bisection implies  the meshsize $h_T$ is comparable to $h_\sigma$, the size of any true-hyperface $\sigma$ of $T$.  In particular, there is a constant $ \bar \gamma$
\begin{equation}\label{side_element_com1p}
 \f{h_\sigma}{h_T} \le \bar \gamma^2 \tforall T.
\end{equation}
\end{enumerateX}

Let $\T$ the set of conforming meshes derived from the initial mesh $\cT_0$.  Define $\T_N \subset \T$ by
$
\T_N = \{ \cT \in \T ~\rest~ \#\cT - \# \cT_0 \le N \}.
$
For a conforming mesh $\cT_1$ with a conforming refinement $\cT_2$ we say $\cT_2 \ge \cT_1$.  The set of refined elements is given by
\begin{equation}\label{refineSe1t}
\cR_{1\goto 2} \coloneqq \cR_{\cT_1 \goto \cT_2} \coloneqq \cT_1 \setminus(\cT_2 \cap \cT_1).
\end{equation}
Define the finite element space
\begin{equation}\label{Vta1u}
\V_\cT \coloneqq H_0^1(\Omega) \cap \prod_{T \in \cT} \PP_n(T) \quad \an \V_k \coloneqq \V_{\cT_k}.
\end{equation}
For subsets $\omega \subseteq \cT$,
\begin{equation}\label{Vtau_omeg1a}
\V_\cT(\omega)  \coloneqq H_0^1(\Omega) \cap  \prod_{T \in \omega}  \PP_n(T),
\end{equation}
where $\PP_n(T)$ is the space of polynomials degree degree $n$ over $T$.
Denote the patch about $T \in \cT$
\begin{equation}\label{def:patc1h}
\omega_T \coloneqq T \cup \{ T' \in \cT ~\rest~ T \cap T' \text{ is a true-hyperface of } T \}.
\end{equation}
For a $d$-simplex $T$, an true-hyperface is a $d-1$ dimensional  face of $T$, \iic{e.g., } a face in 3D or an edge in 2D.
Define the discrete primal problem: Find $u_k \in \V_k \st$
\begin{equation}\label{discrete_prima1l}
a(u_k, v_k) = f(v_k), ~ v_k \in \V_k,
\end{equation}
and the discrete dual problem
\begin{equation}\label{discrete_dua1l}
a^\ast( z_k, v_k ) = g(v_k), ~ v_k \in \V_k.
\end{equation}

\subsection{Goal oriented AFEM (GOAFEM)}
   \label{sec:goafe1m}

As in~\cite{MoSt09} the goal oriented adaptive finite element method (GOAFEM) is based on the 
standard AFEM algorithm:
\begin{equation}\label{semr_nsym}
\text{ SOLVE } \rightarrow \text{ ESTIMATE } \rightarrow \text{ MARK } \rightarrow \text{ REFINE }.
\end{equation}
In the goal oriented method, one enforces contraction of the quasi-error in both 
the primal problem and an associated dual problem.
As shown in section \S\ref{subsec:goalConvergenc1e},
the error in the goal-function satisfies the bound
 \begin{align*}
 |g(u) - g(u_k)| = |a(u - u_k, z - z_k)| \le  2\nrse{u - u_k}\nrse{z - z_k}. 
 \end{align*}
This motivates driving down the energy-error in both the primal and 
dual problems at each iteration. As noted in~\cite{CKNS08} the residual-based error estimator does not exhibit monotone behavior in general, although it is monotone non-increasing with respect to nested mesh refinement when applied to the same (coarse) function.  The quasi-error is shown to contract for each problem for which mesh refinement satisfies the D\"orfler property.  However, refining the mesh with respect to the primal problem does not guarantee the quasi-error in the dual problem will be non-increasing, and vice-versa.
As such, the procedures SOLVE and ESTIMATE are performed for each of the 
primal and dual problems.
The marked set is taken to be the union of marked sets from the primal 
and dual problems, each chosen to satisfy the D\"orfler property.
This method produces a sequence of refinements for which the quasi-error 
in the both the primal and dual problems contract at each step.  The requirement to reduce the quasi-error rather than the energy error as in~\cite{MoSt09} is why the marking strategy in this method differs from the one shown effective for the Laplacian.   Our numerical results demonstrate similar behavior of both methods, although the method presented here has the advantage that the code takes fewer iterations of~\eqref{semr_nsym} to achieve similar results.

{\bf\em Procedure SOLVE.}
The contraction result supposes the exact Galerkin solution is found on each mesh refinement.

{\bf\em Procedure ESTIMATE.}
The estimation of the error on each element is determined by a standard residual-based estimator. The residuals over element interiors and jump-residuals over the boundaries are based on
the {local strong forms} of the elliptic operator and its adjoint as follows.
\begin{equation}\label{Lde1f}
\cL(v) = \grad \cdot (A \grad v) - b\cdot \grad v - c  v; 
\quad \cL^\ast(v) = \grad \cdot (A \grad v) + b\cdot \grad v - c  v.
\end{equation}
The {\em residuals} for the primal and dual problems 
using the sign convention in~\cite{CKNS08} are:
\begin{equation}\label{primalres1i} 
R(v) \coloneqq  f + \cL(v); \quad R^\ast(v) \coloneqq  g + \cL^\ast(v), ~v \in \V_\cT.
\end{equation}
While the primal and dual solutions $u \an z$ of~\eqref{primal_proble1m} and~\eqref{dual_proble1m} respectively satisfy
\[
f(z) = a(u,z)  = a^\ast(z,u) = g(u)
\]
the residuals for the primal and dual problems are in general different.
The {\em jump residual} for the primal and dual problems is
\begin{equation}\label{jump_res1i}
J_T(v) \coloneqq \lbb [A \grad v] \cdot n \rbb_{\pa T},
\end{equation}
where {\em jump operator} $\lbb ~\cdot~ \rbb$ is given by
\begin{equation}\label{jump_o1p}
\lbb \phi \rbb_{\pa T} \coloneqq \lim_{t \goto 0} \phi(x + t n) - \phi(x - tn),
\end{equation}
and $n$ is taken to be the appropriate outward normal defined piecewise on $\pa T$.
On boundary edges $\sigma_b$ we have
\[
\lbb [A \grad v] \cdot n \rbb_{\sigma_b} \equiv 0
\]
so that $\lbb [A \grad v] \cdot n \rbb_{\pa T} = \lbb [A \grad v] \cdot n \rbb_{\pa T\cap \Omega}$.
For clarity, we will also employ the notation
\[
R_T(v) \coloneqq R(v) \rest_T, ~v \in \V_\cT,
\]
and similarly for the other strong form operators. 
The error indicator is given as
\begin{equation}\label{eta_cmpc1t}
\eta_\cT^p(v,T) \coloneqq h_T^p \nr{R(v)}_{L_2(T)}^p +  h_T^{p/2} \nr{ J_T(v)  }_{L_2(\pa T)}^p,   \quad v \in \V_\cT.
\end{equation}
The dual error-indicator is then given by
\begin{equation}\label{zeta_cmpc1t}
\zeta_\cT^p(w,T) \coloneqq h_T^p \nr{R^\ast( w)}_{L_2(T)}^p + 
h_T^{p/2} \nr{  J_T( w)   }_{L_2(\pa T)}^p, \quad w \in \V_\cT.
\end{equation}
The error estimators are given by the $l_p$ sum of error indicators over elements in the space where $p = 1$ or $2$.
\begin{equation}\label{primalestimato1r}
\eta_\cT^p(v)   \coloneqq    \sum_{T \in \cT} \eta_{\cT}^p(v,T) , \quad v \in \V_\cT.
\end{equation}
The dual energy estimator is:
\begin{equation}\label{dualestimato1r}
\zeta_\cT^p(w) \coloneqq  \sum_{T \in \cT} \zeta_\cT^p(w ), \quad w \in \V_\cT.
\end{equation}
The contraction results for the quasi-error presented below will be shown to hold for $p = 1,2$ where the error estimator and oscillation are defined in terms of the $l_p$ norm.
While complexity results are shown  only for $p = 2$, the contraction results for $p=1$ are useful for nonlinear problems; see~\cite{HTZ09a}.

For analyzing oscillation, for $v \in \V_\cT$ let $\Pi^2_m$ the orthogonal projector defined by the best $L_2$ approximation in $\PP_m$ over mesh $\cT$ and $P^2_m = I - \Pi^2_m$. 
Define now the oscillation on the elements $T \in \cT$ for the primal problem by
\begin{equation}\label{osc}
\osc_\cT(v,T) \coloneqq h_T \nr{P_{2n-2}^2R(v)}_{L_2(T)} 
\end{equation}
and analogously for the dual problem. For subsets $\omega \subseteq \cT$ set
\begin{equation}\label{osc_set}
\osc_\cT^p(v,\omega) \coloneqq  \sum_{T \in \omega} \osc_\cT^p(v,T).
\end{equation}
The data estimator and data oscillation, identical for both the primal and dual problems, are given by
\begin{align}\label{data_estimato1r}
\eta_\cT^p (D,T) &\coloneqq h_T^p\left(  \nr{\div A}_{L_\infty(T)}^p + h_T^{-p} \nr A_{L_\infty(\omega_T)}^p  + \nr{c}_{L_\infty(T)}^p  + \nr b_{L_\infty(T)}^p \right), \\
\osc_\cT^p(D,T) &\coloneqq  h_T^p \left(  \nr{P^\infty_{n-1}\div A}_{L_\infty(T)}^p + h_T^{-p}\nr{P^\infty_n A}_{L_\infty(T)}^p \right. \nonumber \\
 & \left. + h_T^p \nr{P^\infty_{n-2}c}_{L_\infty(T)}^p + \nr{P^\infty_{2n-2}c}_{L_\infty(T)}^p
 + \nr{P^\infty_{n-1} b}_{L_\infty(T)}^p \right).
\label{data_os1c}
\end{align}
The data estimator and oscillation over the mesh $\cT$ or a subset $\omega \subset \cT$ are given by the maximum data estimator (oscillation) over elements in the mesh or subset: For $\omega \subseteq \cT$
\[
\eta_\cT (D,\omega) = \max_{T \in \omega} \eta_{\cT}(D,T) \an \osc_\cT (D,\omega) = \max_{T \in \omega} \osc_{\cT}(D,T).
\]
The data estimator and data oscillation on the initial mesh 
\[
\eta_0 \coloneqq \eta_{\cT_0} (D,\cT_0),  ~\an~ \osc_0 \coloneqq \osc_{\cT_0} (D,\cT_0).
\]
As the grid is refined, the data estimator and data oscillation terms satisfy the monotonicity property~\cite{CKNS08} for refinements $\cT_2 \ge \cT_1$
\begin{align}\label{mono_dat1a}
\eta_2(D,\cT_2) \le \eta_1(D,\cT_1) ~\an ~ \osc_2(D,\cT_2) \le \osc_1(D,\cT_1).
\end{align}

{\bf\em Procedure MARK.}
The D\"orfler  marking strategy for the goal-oriented problem is based on the following steps as in~\cite{MoSt09}:  
\begin{enumerateX}
\item
Given $\theta \in (0,1)$, mark sets for each of the primal and dual problems:
\begin{itemizeX}
\item Mark a set $\cM_p \subset \cT_k$ such that,
\begin{equation}\label{m1arkP}
\sum_{T \in \cM_p} \eta_k^2(u_k,T) \ge \theta^2 \eta_k^2(u_k, \cT_k)
\end{equation}
\item Mark a set $\cM_d \subset \cT_k$ such that,
\begin{equation}\label{m1arkD}
\sum_{T \in \cM_d} \zeta_k^2(z_k,T) \ge \theta^2 \zeta_k^2(z_k, \cT_k)
\end{equation}
\end{itemizeX}
\item
Let $ \cM = \cM_p \cup \cM_d$ the union of  sets found for the primal and dual problems respectively.  
\end{enumerateX}

The set $\cM$ differs from that in~\cite{MoSt09}, where the set of lesser cardinality between $\cM_p \an \cM_d$ is used.  In the case of the nonsymmetric problem the error reduced at each iteration is the quasi-error rather than the energy error as in the symmetric problem~\cite{MoSt09}.  This error for each problem is guaranteed to contract based on the refinement satisfying the D\"orfler property. 
As such, refining the mesh with respect to one problem does not guarantee the quasi-error in the other problem is nonincreasing. Sets $\cM_p \an \cM_d$ with optimal cardinality (up to a factor of 2) can be chosen in linear time~ by binning the elements rather than performing a full sort~\cite{MoSt09}.

{\bf\em Procedure REFINE.}
The refinement (including the completion) is performed according to newest vertex bisection~\cite{BDD04}.
The complexity and other properties of this procedure are now well-understood, and will simply be exploited here.

\section{Contraction and convergence theorems}
   \label{sec:contraction_thm1s}

The key elements of the main contraction argument constructed below are quasi-orthogonality~\ref{subsec:quasi_ortho1g}, error estimator as upper-bound on energy-norm error~\ref{subsec:uppe1r} and estimator reduction~\ref{subsec:estim_redu1c}.
Estimator-reduction is shown via the local-perturbation estimate~\ref{subsec:local_pertur1b}. The local perturbation of the oscillation is presented here and used in \S\ref{sec:complexity}.
Mesh refinements $\cT_1 \an \cT_2$ (respectively $\cT_j$) are assumed conforming, and $u_j$ is assumed the Galerkin solution on refinement $\cT_j$.  The following results hold for both the primal and dual problems which differ by the sign of the convection term; therefore, they are established here only for the primal problem.

\subsection{Quasi-orthogonality }\label{subsec:quasi_ortho1g}

Orthogonality in the energy-norm $\nrse{u - u_{2}}^2= \nrse{u - u_1}^2 - \nrse{u_{2} - u_1}^2$ does not generally hold in the nonsymmetric problem.
We use the weaker quasi-orthogonality result to establish contraction of AFEM (GOAFEM).
The following is a variation on Lemma~2.1 in~\cite{MeNo05} (see also~\cite{HTZ09a}).
\begin{lemma}[Quasi-orthogonality]

 Let the problem data satisfy Assumption~\ref{data_assumption1s} and the mesh satisfy conditions (1) and (2) of Assumption~\ref{mesh_assumption1s}. Let $\cT_1, \cT_2 \in \T$ with $\cT_2 \ge \cT_1$. Let $u_k \in \V_k$ the solution to~\eqref{discrete_prima1l}, $k = 1,2$.  There exists a constant $C_\ast > 0$ depending on the problem data $D$ and initial mesh $\cT_0$, and a number $0 < s \le 1$ dictated only by the angles of $\pa \Omega$, such that if the meshsize $h_0$ of the initial mesh satisfies $\bar \Lambda \coloneqq C_\ast h_0^s \nr b_{L_\infty} \mu_0^{-1/2}< 1$, then

\begin{equation}\label{quasi_orth1o}
\nrse{u - u_{2}}^2 \le \Lambda \nrse{u - u_1}^2 - \nrse{u_{2} - u_1}^2,
\end{equation}
where
\[
\Lambda \coloneqq (1 - C_\ast h_0^s \nr b_{L_\infty} \mu_0^{-1/2})^{-1}.
\]
Equality holds (usual orthogonality) when $b = 0$ in $\Omega$, in which case the problem is symmetric.
\end{lemma}

\begin{proof}
The proof follows close that of Lemma~2.1 in~\cite{MeNo05}.
Let
\[
e_{2} \coloneqq u - u_{2}, \quad e_1 \coloneqq  u - u_{1}, \quad  \an \vareps_1  \coloneqq u_{2} - u_1.
\]
By Galerkin orthogonality
\begin{align} \label{error_not_orth1o} 
\nrse{e_1}^2  = a(e_1, e_1)  = \nrse{e_{2}}^2 + \nrse{\vareps_1}^2 + a(\vareps_1, e_{2}).
\end{align}
Rearranging and applying the divergence-free condition on the convection term
\[
\nrse{e_{2}}^2 = \nrse{e_1}^2 - \nrse{\vareps_1}^2 - 2 \langle b \cdot \grad \vareps_{1}, e_{2}\rangle .
\]
Applying H\"older's inequality and coercivity (\ref{coerciv1e}) $|\vareps_1|_{H^1} \le  \mu_0^{-\half} \nrse {\vareps_1}$ followed by Young's inequality with constant $\delta$ to be determined,
\begin{align}\label{2b_youn1g} 
- 2 \langle b \cdot \grad \vareps_{1}, e_{2} \rangle
 \le \delta \nr{e_{2}}_{L_2}^2 +  \f{ \nr b_{L_\infty}^2}{\delta \mu_0 } \nrse{  \vareps_{1}} ^2.
\end{align}
By a duality argument for some $C_\ast>0$ assuming 
$u\in H^{1+s}(\Omega)$ for some $0 < s \le 1$ depending on the angles of $\pa \Omega$
\begin{equation}\label{L2_le_energ1y}
\nr{e_{2}}_{L_2} \le  C_\ast h_0^s \nrse {e_{2}}.
\end{equation}
The details of this argument as described in the appendix~\S\ref{subsec:duality} may also be found in~\cite{AOB01} and~\cite{Ci02}.
Applying (\ref{L2_le_energ1y}) and (\ref{2b_youn1g}) to (\ref{error_not_orth1o}),
\begin{equation}
(1 - \delta C_\ast^2 h_0^{2s})\nrse{u - u_{2}}^2  \le \nrse{u - u_1}^2 -\left( 1 - \f {\nr b_{L_\infty}^2}{\delta \mu_0} \right) \nrse{u_1 - u_{2}}^2.
\end{equation}
Choose $\delta$ to equate coefficients
\[
\delta C_\ast^2 h_0^{2s} =\f {\nr b_{L_\infty}^2}{\delta \mu_0} \implies \delta  = \f{\nr b_{L_\infty}}{C_\ast h_0^s\sqrt{\mu_0}},
\]
then 
\[
\nrse{u - u_{2}}^2 \le \left( 1 - \nr b_{L_\infty} C_\ast h_0^s \mu_0^{-\half} \right)^{-1}\nrse{u - u_1}^2 - \nrse{u_1 - u_{2}}^2.
\]
Assuming the initial mesh as characterized by $h_0$ satisfies
\begin{equation}\label{fine_init_mes1h}
\bar \Lambda = \nr b_{L_\infty} C_\ast h_0^s \mu_0^{-\half}  < 1,
\end{equation}
the quasi-orthogonality result holds.
\end{proof}

Note that by (\ref{error_not_orth1o}) we also have
\begin{equation}
\nrse{\vareps_1}^2 = \nrse{e_1}^2 - \nrse{ e_2}^2 - 2\langle b \cdot \grad e_2 , \vareps_1 \rangle.
\end{equation}
Similarly to (\ref{2b_youn1g})
\begin{align}
- 2 \langle b \cdot \grad e_{2}, \vareps_1\rangle  \ge -2| \langle b \cdot \grad e_{2}, \vareps_1 \rangle |
 \ge -\delta \nr{ \vareps_1}_{L_2}^2 -  \f{ \nr b_{L_\infty}^2}{\delta \mu_0 } \nrse{  e_{2}}^2, 
\end{align}
which under the same assumptions yields the estimate
\begin{equation}\label{other_orth1o}
\nrse{u_{2} - u_1}^2 \ge (1 + \bar \Lambda)^{-1} \nrse{u - u_1}^2 - \nrse{ u - u_{2}}^2,
\end{equation}
where $\bar \Lambda < 1 \implies (1 + \bar \Lambda)^{-1} > 1/2$. 

\subsection{Error estimator as global upper-bound}\label{subsec:uppe1r}

We now recall the property that the error estimator is a global upper bound on the error.
The proof is fairly standard; see e.g.~\cite{MoSt09} (Proposition 4.1), \cite{MeNo05} (3.6), and~\cite{HTZ09a}.
\begin{lemma}[Error estimator as global upper-bound]
Let the problem data satisfy Assumption~\ref{data_assumption1s} and the mesh satisfy conditions (1) and (2) of Assumption~\ref{mesh_assumption1s}. Let $\cT_1, \cT_2 \in \T$ with $\cT_2 \ge \cT_1$.  Let $u_k \in \V_k$ the solution to~\eqref{discrete_prima1l}, $k = 1,2$  and $u$ the solution to~\eqref{primal_proble1m}. Let 
\[
G = G(\cT_2, \cT_1) \coloneqq \{T \subset \cT_1 ~\rest~T \cap \tilde T \ne \emptyset \text{ for some } \tilde T \in \cT_1, \tilde T \notin \cT_2\}.
\] 
Then for global constant $C_1$ depending on the problem data $D$ and initial mesh $\cT_0$
\begin{equation}\label{indicators_ge_energ1y}
\nrse{u_{2} - u_1} \le C_1   \eta_1(u_1,G) 
\end{equation}

and in particular 
\begin{equation}\label{estimator_ge_erro1r}
\nrse{u - u_1} \le C_1 \eta_1(u_1, \cT_1).
\end{equation}
\end{lemma}

\subsection{Local perturbation}\label{subsec:local_pertur1b}
The local perturbation property established in~\cite{CKNS08}, analogous to the local Lipshitz property in~\cite{HTZ09a}, is a key step in establishing the contraction result.  This is a minor variation on Proposition 3.3 in~\cite{CKNS08} which deals with a symmetric problem.  Here, we include a convection term in the estimate.
In particular, \eqref{perturb_es1t} shows that the difference in the error indicators over an element $T$ between two functions in a given finite element space may be bounded by a fixed factor of the native norm over the patch $\omega_T$ of the difference in functions.
In contrast with the analogous result in~\cite{CKNS08} the estimate~\eqref{perturb_os1c} involves a fixed factor of the native norm over an individual element rather than a patch as by the continuity of $A$ the oscillation term does not involve the jump residual.

We include the proof of ~\eqref{perturb_es1t} for completeness.  The proof of~\eqref{perturb_os1c} may be found in~\cite{CKNS08} with the final result inferred by the absence of the jump residual in the oscillation term.

\begin{lemma}[Local perturbation]
Let the problem data satisfy Assumption~\ref{data_assumption1s} and the mesh satisfy condition (1) of Assumption~\ref{mesh_assumption1s}. Let $\cT \in \T$. For all $T \in \cT$ and for any  $v, w \in \V_\cT$
\begin{align}\label{perturb_es1t}
\eta_\cT(v,T) & \le \eta_\cT(w,T) + \bar \Lambda_1 \eta_\cT(D,T)\nr{v - w}_{H^1(\omega_T)} \\
\label{perturb_os1c}
\tosc_\cT(v,T) & \le \tosc_\cT(w,T) + \bar \Lambda_2 \tosc_\cT(D,T)\nr{v - w}_{H^1(T)}
\end{align}
where recalling~\eqref{def:patc1h}  $\omega_T$ is the union of $T$ with elements in $\cT$ sharing a true-hyperface with $T$.  The constants $\bar \Lambda_1, \bar \Lambda_2 > 0$ depend on the initial mesh $\cT_0$, the dimension $d$ and the polynomial degree $n$.
\end{lemma}

\begin{proof}[Proof of~\eqref{perturb_es1t}]
From~\eqref{eta_cmpc1t}
\begin{equation}
\eta_\cT^p(v,T) \coloneqq h_T^p \nr{R(v)}_{L_2(T)}^p +  h_T^{p/2} \nr{ J_T(v)    }_{L_2(\pa T)}^p,   \quad v \in \V_\cT.
\end{equation}
Denote $\eta_\cT(v,T)$ by $\eta(v,T)$. Set $e = v - w$. By linearity
\begin{align*}
R(v) = R(w + e) = f + \cL( w + e) = f + \cL(w) + \cL(e) = R(w) + \cL(e)
\end{align*}
and
\begin{align*}
J(v) = J(w + e) = J(w) + J(e).
\end{align*}
 For $p = 1$ by the triangle inequality
\begin{align*}
\eta(v,T) & = h_T \nr{R(w) + \cL(e)}_{L_2(T)} + h_T^\half \nr{J(w) +J(e)}_{L_2(\pa T)} \\
& \le \eta(w,T) + h_T\nr{\cL (e)}_{L_2(T)} + h_T^\half \nr{J(e)}_{L_2(\pa T)} .
\end{align*}
For $p = 2$ using the generalized triangle-inequality
\begin{equation}\label{subsec:triangl1e}
\sqrt{(a + b)^2 + (c + d)^2} \le \sqrt{a^2 + c^2} + b + d, \quad \text{ for }  a,b,c,d > 0
\end{equation}
we have
\begin{align*}
\eta(v,T) & = \left( h_T^2 \nr{R(w) + \cL(e)}_{L_2(T)}^2 + h_T \nr{J(w) + J(e)}_{L_2(\pa T)}^2\right)^{1/2} \\
& \le \eta(w,T) + h_T\nr{\cL (e)}_{L_2(T)} + h_T^\half \nr{J(e)}_{L_2(\pa T)} .
\end{align*}
Consider the second term on the RHS $h_T\nr{\cL(e)}_{L_2(T)}$.  By definition~\eqref{Lde1f} of $\cL(~\cdot~)$, the product rule applied to the diffusion term and the triangle-inequality
\begin{align*}
\nr{\cL(e)}_{L_2(T)} 
 \le \nr{\div A \cdot \grad e}_{L_2(T)} + \nr{A : D^2 e}_{L_2(T)}  + \nr{ce}_{L_2(T)} + \nr{b \cdot \grad e}_{L_2(T)} 
\end{align*}
where $D^2 e$ is the Hessian of $e$.  Consider each term.
The first diffusion term
\begin{align}\label{PI:diff1}
\nr{\div A \cdot \grad e}_{L_2(T)} & \le  \nr{\div A}_{L_\infty(T)} \nr{\grad e}_{L_2(T)}
\end{align}
by the inequality
\begin{equation}\label{L2toLin1f}
\nr{v \cdot z}_{L_2(T)} \le  \nr{v}_{L_\infty(T)} \nr{z}_{L_2(T)}, \quad v \in L_\infty(T),~ z \in L_2(T).
\end{equation}
Applying~\eqref{L2toLin1f} and inverse-estimate~\cite{BS08} to the second diffusion term
\begin{align}\notag
\nr{A : D^2 e}_{L_2(T)}& \le  \nr {A}_{L_\infty(T)} \nr{D^2 e}_{L_2(T)} \\ \label{PI:diff2}
& \le   C_I h_T^{-1 } \nr {A}_{L_\infty(T)}  \nr{\grad e}_{L_2(T)}.
\end{align}
For the reaction term
\begin{align}\label{PI:react}
\nr{ce}_{L_2(T)} \le \nr c_{L_\infty(T)} \nr e_{L_2(T)}.
\end{align}
For the convection term applying~\eqref{L2toLin1f}
\begin{align}\label{PI:convect}
\nr{b \cdot \grad e}_{L_2(T)} & \le \nr b_{L_\infty(T)} \nr{\grad e}_{L_2(T)}.
\end{align}
Consider the the jump-residual term $\nr{J(e)}_{L_2(\pa T)}$.
For each interior true-hyperface 
$\sigma = T \cap T', ~ T, T' \in \cT$ by~\eqref{jump_o1p}
\begin{align}\label{jump_convention}
J(e) \rest_\sigma  & \coloneqq \lim_{t \goto 0^+}(A \grad e)(x + t n_\sigma) - \lim_{t \goto 0^-}(A \grad e)(x - tn_\sigma) \nonumber \\
& ~=  n_\sigma \cdot (A \grad e)\rest_T - n_\sigma \cdot (A \grad e)\rest_{T'}
\end{align}
where $(A \grad e) \rest_T$ is understood to refer to the product of the limiting value of  $A  \grad e$ as the element boundary is approached from the interior of $T$.  
By the triangle-inequality
\begin{align*}
\nr{J(e)}_{L_2(\sigma)} & \le \nr{n_\sigma \cdot (A \grad e)\rest_T}_{L_2(\sigma)} + \nr{n_\sigma \cdot (A \grad e)\rest_{T'}}_{L_2(\sigma)}.
\end{align*}
By bounds for the inner-product with a unit normal and a matrix-vector product
\begin{align}\label{normal_boun1d}
\nr{\phi \cdot n}_{L_2(\sigma)} & \le \nr \phi_{L_2(\sigma)}, \quad \phi \in L_2(\sigma),\\
\label{MLinfL2}
\nr{M\phi}_{L_2(T)} & \le  \nr{M}_{L_\infty(T)} \nr \phi_{L_2(T)}, \quad M \in L_\infty(T), ~\phi \in L_2(T)
\end{align}
obtain
\begin{align} \label{jr1:split}
\nr{n_\sigma \cdot (A \grad e)\rest_T}_{L_2(\sigma)} & \le \nr{ (A \grad e)\rest_T}_{L_2(\sigma)} 
 \le  \nr {A\rest_T}_{L_\infty(\sigma)} \nr{\grad e \rest_T}_{L_2(\sigma)}.
\end{align}
Applying the trace theorem and an inverse inequality to $\nr{\grad e \rest_T}_{L_2(\sigma)}$ via the inequality
\begin{equation}\label{L2trace}
\nr{\phi}_{L_2(\sigma)} \le C h_T^{-\half} \nr{\phi}_{L_2(T)}, \quad \phi \in L_2(T)
\end{equation}
we have
\begin{equation}\label{jr1:1}
 \nr {\grad e\rest_T}_{L_2(\sigma)}  \le C_T(\bar \gamma)^{d-1} h_T^{-\half}\nr {\grad e}_{L_2(T)}.
\end{equation}
By the Lipschitz property of $A$ \label{Linf_trace}
\begin{equation}\label{jr1:2}
\nr{A\rest_T}_{L_\infty(\sigma)} = \nr{A}_{L_\infty(\sigma)} \le \nr{A}_{L_\infty(T)}.
\end{equation}
By~\eqref{jr1:split}, \eqref{jr1:1}, \eqref{jr1:2} and comparability of mesh diameters~\eqref{neighborRa1t}
\begin{align*}
\nr{J(e)}_{L_2(\sigma)} 
& \le  2C_T(\bar \gamma)^{d-1}  \gamma_N^\half  h_{T}^{-\half} \nr A_{L_\infty(\omega_T)} \nr{\grad e}_{L_2(\omega_T)}.
\end{align*}
Element $T$ has at most $d+1$ interior true-hyperfaces yielding
\begin{align*}
\nr{J(e)}_{L_2(\pa T)} & \le 2 (d+1)~ C_T(\bar \gamma)^{d-1} \gamma_N^\half  h_{T}^{-\half}  \nr A_{L_\infty(\omega_T)} \nr{\grad e}_{L_2(\omega_T)}\\
& = C_Jh_{T}^{-\half} \nr A_{L_\infty(\omega_T)} \nr{\grad e}_{L_2(\omega_T)}.
\end{align*}
Putting together the terms from $\cL$ and from the jump residual,
\begin{align*}
\eta(v,T) & \le \eta(w,T) + h_T \left(  \nr{\div A}_{L_\infty(T)} +  C_I h_T^{-1} \nr{A}_{L_\infty(T)} \right. \\
& + \left.  \nr c_{L_\infty(T)} +  \nr b_{L_\infty(\omega)} \right)\nr{e}_{H^1(T)} + h_T^\half C_{J}h_T^{-\half} \nr A_{L_\infty(\omega_T)} \nr{e}_{H^1(\omega_T)} \\
 & \le \eta(w,T) + C_{TOT'}\, \eta_T(D,T)\nr{v - w}_{H^1(\omega_T)}
\end{align*}
where $C_{TOT'}$ differs by a factor of $2$ for $p = 1,2$.
\end{proof}

\subsection{Estimator reduction}\label{subsec:estim_redu1c}

We now establish one of the three key results we need, namely estimator reduction.  This result is a minor variation of~\cite{CKNS08} Corollary 2.4 and is stated here for completeness.  
\begin{theorem}[Estimator reduction]
Let the problem data satisfy Assumption~\ref{data_assumption1s} and the mesh satisfy conditions (1) and (2) of Assumption~\ref{mesh_assumption1s}. Let $\cT_1 \in \T, ~ \cM \subset \cT_1$ and $\cT_2 = \text{REFINE}(\cT_1, \cM)$.  For $p = 1$ let
\[
\Lambda_1 \coloneqq (d+2)^2\bar \Lambda_1^2 m_\cE^{-2} \quad \an~ \lambda  \coloneqq (1 - 2^{-1/2d})^2 > 0
\]
and for $p = 2$ let 
\[
\Lambda_1 \coloneqq (d+2)\bar \Lambda_1^2 m_\cE^{-2} \quad \an~  \lambda \coloneqq 1 - 2^{-1/d}> 0
\]
with $\bar \Lambda_1$ from~\ref{subsec:local_pertur1b} (Local Perturbation). Then  for any $v_1 \in \V_1$ and $v_2 \in \V_2$ and $\delta > 0$ 
\begin{align}\label{est_redu1c}
\eta_2^2(v_2, \cT_2)  \le  &(1 + \delta) \left\{ \eta_1^2(v_1,\cT_1) - \lambda \eta_1^2(v_1,\cM) \right\}  + ( 1 + \delta^{-1}) \Lambda_1 \eta_0^2 \nrse{v_2 - v_1}^2.
\end{align}
\end{theorem}

\begin{proof}
The proofs for $p = 1$ and $p = 2$ are similar.
For $p = 1$ it is necessary to sum over elements before squaring and 
for $p = 2$ square first then sum over elements.

{\em Proof for the case $p = 1$.}
By the local Lipschitz property~\eqref{perturb_es1t}
\begin{align}\label{e1st_red1}
\eta_2 (v_2, T) \le \eta_2(v_1,T) + \bar \Lambda_1 \eta_2(D,T) \nr{v_2 - v_1}_{H^1(\omega_T)}.
\end{align}
Summing over all elements $T \in \cT_2$, the sum of norms over $\omega_T$ covers each element  at most $(d+2)$ times as each patch $\omega_T$ is the union of element $T$ and the (up to) $d+1$ elements sharing a true-hyperface with $T$.  Then by the coercivity (\ref{explicit_coerciv1e}) over $\Omega$ 
\begin{align}\label{e1st_red2} 
\eta_2 (v_2, \cT_2) 
& \le  \eta_2(v_1,\cT_2) + (d + 2)\bar \Lambda_1 {m_\cE}^{-1}\eta_2^2(D,\cT_2) \nrse{v_2 - v_1}. 
\end{align}
Squaring (\ref{e1st_red2}) and applying Young's inequality with constant $\delta$ to the cross-term,
\begin{align}\label{e1st_red3} \notag %
\eta_2^2 (v_2, \cT_2) & \le (1 + \delta) \eta_2^2(v_1,\cT_2) +(1 + \delta^{-1})(d+2)^2 \bar \Lambda_1^2 m_\cE^{-2} \eta_2^2(D,\cT_2) \nrse{v_2 - v_1}^2\\
& =   (1 + \delta) \eta_2^2(v_1,\cT_2) +(1 + \delta^{-1})\Lambda_1 \eta_2^2(D,\cT_2) \nrse{v_2 - v_1}^2.
\end{align}

For an element $T \in \cM$ marked for refinement, let $\cT_{2,T} \coloneqq \{ T' \in \cT_2 ~\rest~ T' \subset T\}$. As $v_1 \in \V_1$  has no discontinuities across element boundaries in $\cT_{2,T}$, we have $J(v_1) = 0$ on true hyperfaces in the interior of $\cT_{2,T}$.

Recall the element diameter $h_T = |T|^{1/d}$.  For an element $T$ marked for refinement, $T'$ must be a proper subset of $T$, in particular a product of at least one bisection so that
\begin{equation}\label{Tprime_vsT}
|T'| \le \f 1 {2}|T| \leftrightarrow |T'|^{1/d} \le \f 1 {2^{1/d}}|T|^{1/d} \leftrightarrow h_{T'} \le \f 1 {2^{1/d}} h_T. 
\end{equation}
Then 
\begin{align}\label{e1st_red4} \notag %
\sum_{T' \in \cT_{2,T}}\eta_2(v_1,T') & \le  \sum_{T' \in \cT_{2,T}}  h_{T'} \nr{R(v_1)}_{L_2(T')} + \sum_{T' \in \cT_{2,T}} h_{T'}^\half \nr{J(v)}_{L_2(\pa T' \cap \pa T)}\\
\notag %
& \le 2^{-1/d} h_T \sum_{T' \in \cT_{2,T}}\left(   \nr{R(v_1)}_{L_2(T')}  \right)+ 
 2^{-1/2d}h_T^\half  \nr{J(v)}_{L_2(\pa T)}\\
\notag %
& \le  2^{-1/2d} \left(  h_T \nr{R(v_1)}_{L_2(T)} + h_T^\half  \nr{J(v)}_{L_2(\pa T)} \right)\\
& = 2^{-1/2d}\eta_1(v_1,T).
\end{align}
For an element $T \notin \cM$, that is $T' = T$ the indicator is reproduced 
\begin{align}\label{e1st_red5}
\eta_2(v_1,T')  = \eta_1(v_1,T). 
\end{align}
Sum over all $T \in \cT_2$ by estimates (\ref{e1st_red4}),  (\ref{e1st_red5}) writing the sum of  indicators over the  $\cT_1 \setminus \cM$ as the total estimator less the  indicators over the refinement set $\cM$. Let the refined set $\cR \coloneqq \{ T \in \cT_2 ~\rest~ T' \subset \tilde T \text{ for some } \tilde  T \in \cM\}$ then 
\begin{align}\label{e1st_red6} \notag
\eta_2(v_1,\cT_2) & = \sum_{T \in \cT_2} \eta_2(v_1, T) \\
\notag %
& = \sum_{T \in \cT_2 \setminus \cR} \eta_2(v_1,T) +  \sum_{T \in \cR} \eta_2(v_1, T)  \\
\notag %
& \le \eta_1(v_1, \cT_1) - \eta_1(v_1, \cM) + 2^{-1/2d} \eta_1(v_1, \cM) \\
& = \eta_1(v_1, \cT_1) - \lambda_1 \, \eta_1(v_1, \cM)
\end{align}
where $\lambda_1 = 1 - 2^{-1/2d} < 1$.
Squaring~\eqref{e1st_red6}
 \begin{align} \label{e1st_red7} \notag
\eta_2^2(v_1,\cT_2) 
& \le \eta_1^2(v_1, \cT_1) + \lambda_1^2 \, \eta_1^2(v_1, \cM) - 2   \lambda_1^2 \, \eta_1^2(v_1, \cM) \\
& = \eta_1^2(v_1, \cT_1) - \lambda  \, \eta_1^2(v_1, \cM)
\end{align}
 where $\lambda = \lambda_1^2 = (1-2^{-1/2d})^2$.
Applying~\eqref{e1st_red7} to~\eqref{e1st_red3} and applying monotonicity of the data-estimator
\begin{align*}
\eta_2^2 (v_2, \cT_2) 
 & \le  (1 + \delta) \left( \eta_1^2(v_1, \cT_1) - \lambda \, \eta_1^2(v_1, \cM)\right) \\
 & + (1 + \delta^{-1}) \Lambda_1^2 \eta_0^2(D,\cT_0) \nrse{v_2 - v_1}^2. 
\end{align*}

The proof for the  case $p = 2$ is similar and may be found in~\cite{CKNS08}.
\end{proof}

\subsection{Contraction of AFEM}\label{subsec:contractio1n}

We now establish the main contraction results. The contraction result~\ref{contraction_eac1h} is a modification of~\cite{CKNS08} Theorem 4.1.  Here we use quasi-orthogonality to establish contraction of each of the nonsymmetric problems~\eqref{primal_proble1m} and~\eqref{dual_proble1m}.

\begin{theorem}[GOAFEM contraction]\label{contraction_eac1h}
Let the problem data satisfy Assumption~\ref{data_assumption1s} and the mesh satisfy Assumption~\ref{mesh_assumption1s}. Let $u$ the solution to~\eqref{primal_proble1m}. 
Let $\theta \in (0,1]$, and let $\{\cT_k, \V_k, u_k\}_{k \ge 0}$ be the sequence of meshes, finite element spaces and discrete solutions produced by GOAFEM.  Then there exist constants $\gamma > 0 \an 0 < \alpha < 1$, depending on the initial mesh $\cT_0$ and marking parameter $\theta$ such that
\begin{equation}\label{contractionre1s}
\nrse{u - u_{k+1}}^2 + \gamma \eta_{k+1}^2 \le \alpha^2\left(  \nrse{u - u_k}^2 + \gamma \eta_k^2 \right).
\end{equation}
The analogous result holds for the dual problem with $\{\cT_k, \V_k, z_k\}_{k \ge 0}$  the sequence of meshes, finite element spaces and discrete solutions produced by GOAFEM.
\end{theorem}

\begin{proof}
Denote
\[
e_k = u - u_k, \quad e_{k+1} = u - u_{k+1} \quad \an \quad \vareps_k = u_{k+1} - u_k.
\]
Let
\[
\eta_k = \eta_k(u_k, \cT_k),\quad \eta_k(\cM_k) = \eta_k(u_k, \cM_k) \quad \an \quad \eta_{k+1} = \eta_{k+1}(u_{k+1}, \cT_{k+1}).
\]
 By the result of  Estimator Reduction~\ref{subsec:estim_redu1c}, for any $\delta > 0$
\[
\eta^2_{k+1} \le (1 + \delta) \left\{ \eta_k^2 - \lambda \eta_k^2(\cM_k)\right\} + (1 + \delta^{-1}) \Lambda_1 \eta_0^2 \nrse{\vareps_k}^2.
\]
Multiplying this inequality by positive constant $\gamma$ (to be determined) and adding the quasi-orthogonality estimate $\nrse{e_{k+1}}^2 \le \Lambda \nrse{e_k}^2 - \nrse{\vareps_k}^2$ obtain
\begin{align}\label{est41_1}
\nrse{e_{k+1}}^2  + \gamma \eta^2_{k+1} &\le \Lambda \nrse{e_k}^2 - \nrse{\vareps_k}^2 + \gamma  (1 + \delta) \left\{ \eta_k^2 - \lambda \eta_k^2(\cM_k)\right\} \nonumber \\
& \qquad + \gamma (1 + \delta^{-1}) \Lambda_1 \eta_0^2 \nrse{\vareps_k}^2.
\end{align}
Choose $\gamma$ to eliminate $\nrse{\vareps_k}$ the error between consecutive estimates by setting
\begin{align}\label{4_1:choose_gamma}
\gamma (1 + \delta^{-1}) \Lambda_1 \eta_0^2 = 1 \iff \gamma = \f{1}{(1 + 1/\delta)  \Lambda_1 \eta_0^2} \iff \gamma(1 + \delta) = \f {\delta}{\Lambda_1 \eta_0^2}.
\end{align}
Applying~\eqref{4_1:choose_gamma} to~\eqref{est41_1} obtain
\begin{align}\label{est41_2}
\nrse{e_{k+1}}^2  + \gamma \eta^2_{k+1} \le \Lambda \nrse{e_k}^2  + \gamma  (1 + \delta)  \eta_k^2 -  \gamma  (1 + \delta) \lambda \eta_k^2(\cM_k) .
\end{align}
By the D\" orfler marking strategy $\eta_k^2(\cM_k) \ge \theta^2 \eta_k^2$ so that
\begin{align}\label{est41_3}
\nrse{e_{k+1}}^2  + \gamma \eta^2_{k+1} \le \Lambda \nrse{e_k}^2  + \gamma  (1 + \delta)  \eta_k^2 -  \gamma  (1 + \delta) \lambda \theta^2 \eta_k^2 .
\end{align}
Split the last term by factors of $\beta \an (1 - \beta)$ for any $\beta \in (0,1)$ to arrive at
\begin{align}\label{est41_3b}
\nrse{e_{k+1}}^2  + \gamma \eta^2_{k+1} &\le \Lambda \nrse{e_k}^2  + \gamma  (1 + \delta)  \eta_k^2 - \beta \gamma  (1 + \delta) \lambda \theta^2 \eta_k^2 \nonumber \\
& \qquad - (1 - \beta) \gamma  (1 + \delta) \lambda \theta^2 \eta_k^2 .
\end{align}
Applying the upper-bound estimate~\eqref{estimator_ge_erro1r} $\nrse{e_k}^2 \le C_1 \eta_k^2$ to the term multiplied by $\beta$ then by (\ref{4_1:choose_gamma}) 
\begin{align}\label{est41_3c}
\nrse{e_{k+1}}^2  + \gamma \eta^2_{k+1} & \le \Lambda \nrse{e_k}^2  - \f {\beta \gamma  (1 + \delta) \lambda \theta^2}{C_1} \nrse{e_k}^2  + \gamma  (1 + \delta)  \eta_k^2 \nonumber \\
& \qquad - (1 - \beta) \gamma  (1 + \delta) \lambda \theta^2 \eta_k^2 \\
& =  \Lambda \nrse{e_k}^2  - \beta \f { \delta \lambda \theta^2}{C_1 \Lambda_1 \eta_0^2} \nrse{e_k}^2  + \gamma  (1 + \delta)  \eta_k^2 \nonumber \\
& \qquad - (1 - \beta) \gamma  (1 + \delta) \lambda \theta^2 \eta_k^2 \\
& =  \left( \Lambda   - \beta \f { \delta \lambda \theta^2}{C_1 \Lambda_1 \eta_0^2}  \right) \nrse{e_k}^2  + \gamma  (1 + \delta) \left(1   - (1 - \beta)  \lambda \theta^2\right) \eta_k^2  \\
& = \alpha_1 ^2(\delta, \beta)  \nrse{e_k}^2 + \gamma \alpha_2^2(\delta, \beta) \eta_k^2
\end{align}
where
\begin{align}\label{41_def_alpha12}
\alpha_1^2(\delta, \beta) \coloneqq  \Lambda   - \beta \f {  \lambda \theta^2}{C_1 \Lambda_1 \eta_0^2}\delta , \quad \alpha_2^2(\delta, \beta) \coloneqq  (1 + \delta) \left(1   - (1 - \beta)  \lambda \theta^2\right).
\end{align}
Choose $\delta$ small enough so that
\begin{align*}\label{41_deltacond}
\alpha^2 \coloneqq \max\{ \alpha_1^2, \alpha_2^2\} < 1.
\end{align*} 
To ensure such a $\delta$ exists in light of the quasi-orthogonality constant $\Lambda > 1$ observe
\[
\alpha_1^2 < 1 \text{ when } \delta > (\Lambda -1) \f {C_1 \Lambda_1 \eta_0^2}{\beta \lambda \theta^2}
\]
and
\[
\alpha_2^2 < 1 \text{ when } \delta < \left( 1 - (1 - \beta)\lambda \theta^2 \right)^{-1} -1 = 
\f{(1 - \beta) \lambda \theta^2}{ 1 - (1 - \beta)\lambda \theta^2}
\]
so to obtain an interval of positive measure where $\delta$ may be found we require
\begin{align*}
 (\Lambda -1) \f {C_1 \Lambda_1 \eta_0^2}{\beta \lambda \theta^2} < \f{(1 - \beta) \lambda \theta^2}{ 1 - (1 - \beta)\lambda \theta^2}
\end{align*}
placing a second constraint on the quasi-orthogonality constant
\begin{equation}\label{second_QC}
\Lambda < 1 + \f {\lambda^2 \theta^4 \beta(1 - \beta)}{ C_1 \Lambda_1 \eta_0^2 \left( 1 - (1 - \beta)\lambda \theta^2 \right)}
\end{equation}
where $0<\beta<1$ and $\theta < 1$ may be chosen.
\end{proof}
Notice the choice of $\delta$ small enough to satisfy $\alpha^2 < 1$ is always possible, as each term may be independently driven below unity by a sufficiently small value of $\delta$, so long as the quasi-orthogonality constant $\Lambda$ is sufficiently close to one.  For a discussion on the optimal contraction factor see Remark 4.3 in~\cite{CKNS08}; see also the discussion in~\cite{HTZ09a}.

\subsection{Convergence of GOAFEM}
   \label{subsec:goalConvergenc1e}

We now derive a bound on error in the goal function.
\begin{theorem}[GOAFEM functional convergence]
Let the problem data satisfy Assumption~\ref{data_assumption1s} and the mesh satisfy Assumption~\ref{mesh_assumption1s}.  Let $u$ the solution to~\eqref{primal_proble1m} and $z$ the solution to~\eqref{dual_proble1m}.
Let $\theta \in (0,1]$, and let $\{\cT_k, \V_k, u_k, z_k\}_{k \ge 0}$ be the sequence of meshes, finite element spaces and discrete primal and dual solutions produced by GOAFEM.  Let $\gamma_p$ the constant $\gamma$ from Theorem~\ref{contraction_eac1h} applied to the primal problem~\eqref{discrete_prima1l} and $\gamma_d$ the constant $\gamma$ from Theorem~\ref{contraction_eac1h} applied to the dual~\eqref{discrete_dua1l}.  Then for constant $\alpha < 1$ as determined by Theorem~\ref{contraction_eac1h}
\end{theorem}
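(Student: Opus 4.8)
The plan is to express the goal error as a product of the primal and dual energy errors and then apply the contraction result of Theorem~\ref{contraction_each} to each factor separately; this product structure is exactly what converts the per-problem rate $\alpha^k$ into the accelerated rate $\alpha^{2k}$. First I would establish the identity $g(u) - g(u_k) = a(u - u_k, z - z_k)$. Since $z$ solves the dual problem, $g(v) = a^\ast(z,v) = a(v,z)$ for every $v \in H_0^1(\Omega)$, so by linearity $g(u) - g(u_k) = a(u - u_k, z)$. Galerkin orthogonality for the primal problem (from \eqref{primal_problem} and \eqref{discrete_primal}) gives $a(u - u_k, v_k) = 0$ for all $v_k \in \V_k$; choosing $v_k = z_k$ and subtracting yields $g(u) - g(u_k) = a(u - u_k, z - z_k)$.

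Next I would bound this bilinear form by the product of energy norms. Splitting $a$ into its symmetric part (diffusion and reaction) and skew part (convection), the symmetric contribution is controlled directly by Cauchy--Schwarz in the energy inner product, while the convection term $\langle b \cdot \grad(u-u_k),\, z - z_k\rangle$ is handled through the $L_\infty$ bound on $b$, the coercivity estimate $|v|_{H^1} \le \mu_0^{-1/2}\nrse{v}$ following from \eqref{coercive}, and the native/energy norm equivalence \eqref{explicit_coercive}--\eqref{norm_continuity}. Collecting the resulting constants gives the bound $|a(u - u_k, z - z_k)| \le 2\,\nrse{u - u_k}\,\nrse{z - z_k}$ already announced in \S\ref{sec:goafem}.

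Then I would invoke Theorem~\ref{contraction_each} for the primal and dual problems, taking $\alpha$ to be the larger of the two contraction factors. Iterating the contraction from the initial mesh gives $\nrse{u - u_k}^2 + \gamma_p \eta_k^2 \le \alpha^{2k}\bigl(\nrse{u - u_0}^2 + \gamma_p \eta_0^2(u_0,\cT_0)\bigr)$ and the analogous estimate for the dual quasi-error with $\gamma_d$ and $\zeta_k$. Dropping the nonnegative estimator terms on the left and taking square roots yields $\nrse{u - u_k} \le \alpha^k\bigl(\nrse{u - u_0}^2 + \gamma_p \eta_0^2(u_0,\cT_0)\bigr)^{1/2}$ and likewise for $z$. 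Substituting both into the product bound of the previous step gives
\[
|g(u) - g(u_k)| \le 2\alpha^{2k}\left(\nrse{u - u_0}^2 + \gamma_p \eta_0^2(u_0,\cT_0)\right)^{1/2}\left(\nrse{z - z_0}^2 + \gamma_d \zeta_0^2(z_0,\cT_0)\right)^{1/2},
\]
which is the asserted goal-oriented convergence estimate.

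The step I expect to be the main obstacle is the energy-norm continuity bound in the second paragraph: because $a$ is nonsymmetric, the energy seminorm only captures its symmetric part, so controlling the convection contribution cleanly by $\nrse{u-u_k}\,\nrse{z-z_k}$ requires carefully reconciling the $H^1$ seminorm that naturally appears on the gradient factor with the $L_2$ norm on the other factor, routed through coercivity and the norm equivalence. The remaining steps are a routine assembly of the Galerkin identity, this continuity bound, and the two iterated contraction estimates.
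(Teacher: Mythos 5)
Your overall skeleton matches the paper's proof: the Galerkin-orthogonality identity $g(u)-g(u_k)=a(u-u_k,z-z_k)$, the splitting of $a(\cdot,\cdot)$ into its symmetric part plus convection, the bound by $2\nrse{u-u_k}\nrse{z-z_k}$, and the iteration of Theorem~\ref{contraction_each} for each of the primal and dual problems. The gap is in how you close the convection term. After H\"older you are left with $\nr b_{L_\infty}\,|u-u_k|_{H^1}\,\nr{z-z_k}_{L_2}$; coercivity handles the gradient factor, but bounding $\nr{z-z_k}_{L_2}$ by the energy norm through the native/energy norm equivalence, as you propose, produces the constant $\nr b_{L_\infty}\,\mu_0^{-1/2} m_\cE^{-1}$, which can be arbitrarily large for strong convection, so the factor $2$ you assert does not follow from the ingredients you list. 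The paper instead applies the Aubin--Nitsche duality estimate of \S\ref{subsec:duality} to the \emph{dual} error, $\nr{z-z_k}_{L_2}\le C_\ast h_0^s\nrse{z-z_k}$ (legitimate because $z_k$ is the Galerkin solution of the dual problem on $\V_k$), so that the convection contribution is $\bar\Lambda\,\nrse{u-u_k}\nrse{z-z_k}$ with $\bar\Lambda=\nr b_{L_\infty}C_\ast h_0^s\mu_0^{-1/2}$, and the fine-initial-mesh hypothesis \eqref{fine_init_mesh} gives $\bar\Lambda<1$ and hence the factor $1+\bar\Lambda\le 2$. This is the same mechanism already used in the quasi-orthogonality lemma, and it is the one place where the mesh-fineness assumption enters this theorem; without it you only get a convergence bound with an uncontrolled data-dependent constant in place of $2$.

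A second, minor point: by discarding the estimator terms before taking square roots you obtain a slightly weaker conclusion than the paper's, which retains the subtracted terms $\gamma_p\eta_k^2$ and $\gamma_d\zeta_k^2$ inside the braces. That is cosmetic; the substantive fix is the duality estimate above.
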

 \begin{align*}\label{goal_contraction_res}
  |g(u) - g(u_k)| & \le 2\left\{ \alpha ^{2k} \left(  \nrse{u - u_0}^2 
 +   \gamma_p \eta_0^2(u_0, \cT_0)  \right) - \gamma_p\eta_k^2 \right\}^{1/2} \nonumber \\
& \quad \times  \left\{ \alpha^{2k} \left(  \nrse{z - z_0}^2 
+  \gamma_d \zeta_0^2(z_0, \cT_0) \right)  - \gamma_d \zeta_k^2\right\}^{1/2}.
 \end{align*}

\begin{proof}
On the primal side for all $v_k \in \V_k$
\[
a(u - u_k, v_k) = a(u, v_k) - a(u_k, v_k) = f(v_k) - f(v_k) = 0,
\]
the primal Galerkin orthogonality property.
On the dual side, $g(u) = a^\ast(z, u)$ and $g(u_k) = a^\ast(z,u_k, )$ so that
\begin{align}\label{goal_to_a}
g(u) - g(u_k)  = a^\ast(z,u - u_k)
 = a(u - u_k, z)
 = a(u - u_k, z - z_k).
\end{align}
Define an inner-product $\alpha$ by the symmetric part of $\forma$
\[
\alpha(v,w) = \langle A \grad v, \grad w \rangle + \langle c  v, w \rangle,
\]
then
\[
\nrse v^2 = a(v,v) = \alpha(v,v),
\]
and
\[
a(v,w) = \alpha(v,w) + \langle b \cdot \grad v, w\rangle.
\]
Then as $\alpha(\cdot, \cdot)$ is a  symmetric bilinear form on  Hilbert space; it is an inner product and it induces a norm identical to the energy norm induced by $\forma$.  As such we may apply the Cauchy-Schwarz inequality~\cite{Evans98} to $\alpha$  and  we're left to handle the convection term.
\begin{align}\notag
a(u - u_k, z - z_k) &= \alpha(u - u_k, z-z_k) + \langle b \cdot \grad ( u - u_k), z - z_k \rangle \\
& \le \nrse{u - u_k}\nrse{z - z_k} +  \langle b \cdot \grad ( u - u_k), z - z_k \rangle.
\end{align}

By H\"older's inequality followed by a duality estimate as in~\S\ref{subsec:duality} on the dual error and coercivity on the primal,
\begin{align} 
\langle b \cdot \grad(u - u_k), z - z_k \rangle 
  \le    \nr b_{L_\infty} C_\ast h_0^{s} \mu_0^{-1/2} \nrse{z - z_k}\nrse{u - u_k}.
 \end{align}
Recalling $\bar \Lambda = \nr b_{L_\infty} C_\ast h_0^s \mu_0^{-\half}$
 \begin{align}
 a(u - u_k, z - z_k) & \le  \nrse{u - u_k}\nrse{z - z_k} +  \bar \Lambda   \nrse{u - u_k} \nrse{z - z_k}.
 \end{align}
Under assumption~\eqref{fine_init_mes1h} $(\bar \Lambda < 1)$ on the initial mesh and from (\ref{goal_to_a}), 
 \begin{align}\label{goal_bound}
 |g(u) - g(u_k)| = |a(u - u_k, z - z_k)| \le  2\nrse{u - u_k} \nrse{z - z_k} .
 \end{align}
 From~\ref{subsec:contractio1n} there is an $\alpha < 1$ such that for the primal problem with estimator $\eta_k$
 \begin{equation}\label{g1iterp}
 \nrse{u - u_{k+1}}^2 \le \alpha^2 \left( \nrse{u - u_k}^2 + \gamma_p \eta_{k}^2 \right) - \gamma_p \eta_{k+1}^2
 \end{equation}
 and for the dual problem with estimator $\zeta_k$
  \begin{equation}\label{g1iterd}
 \nrse{z - z_{k+1}}^2   \le \alpha^2 \left( \nrse{z - z_k}^2 + \gamma_d  \zeta_{k}^2 \right) - \gamma_d \zeta_{k+1}^2.
 \end{equation}
Iterating, we have from \eqref{g1iterp} and \eqref{g1iterd}
\begin{align}\label{g2iterp}
 \nrse{u - u_{k}}^2 + \gamma_p \eta_{k}^2&\le \alpha^{2k} \left( \nrse{u - u_0}^2 + \gamma_p \eta_{0}^2 \right)  \\
 \label{g2iterd}
 \nrse{z - z_{k}}^2 +\gamma_d \zeta_{k}^2 &\le \alpha^{2k} \left( \nrse{z - z_0}^2 + \gamma_d  \zeta_{0}^2 \right). 
\end{align}

From \eqref{goal_bound}, \eqref{g2iterp} and  \eqref{g2iterd} obtain the contraction of error in quantity of interest
 \begin{align}\label{goal_contraction}
  |g(u) - g(u_k)| & \le 2\left\{ \alpha ^{2k} \left(  \nrse{u - u_0}^2 
 +   \gamma_p \eta_0^2(u_0, \cT_0)  \right) - \gamma_p\eta_k^2 \right\}^{1/2} \nonumber \\
& \quad \times  \left\{ \alpha^{2k} \left(  \nrse{z - z_0}^2 
+  \gamma_d \zeta_0^2(z_0, \cT_0) \right)  - \gamma_d \zeta_k^2\right\}^{1/2},
 \end{align}

or more simply
 \begin{align}\label{goal_contraction1}
  |g(u) - g(u_k)| + \gamma_p \eta_k^2 + \gamma_d \zeta_k^2& \le  \alpha ^{2k} \left(  \nrse{u - u_0}^2 
 +   \gamma_p \eta_0^2(u_0, \cT_0) \right. \nonumber \\
& \qquad \left. + \nrse{z - z_0}^2 
+  \gamma_d \zeta_0^2(z_0, \cT_0) \right) \\
& = \alpha^{2k}Q_0^2,
 \end{align}
 with $Q_0$ the quasi-error on the initial mesh.

\end{proof}

\section{Complexity}
   \label{sec:complexity}

Following the discussion in~\cite{CKNS08}, we can bound the growth of the mesh by
\begin{align}
\# \cT_k - \# \cT_0& \le C \left\{
  Q_k(u_k, \cT_k)^{-1/s}  + Q_k(z_k, \cT_k)^{-1/t}  \right\} \nonumber \\
& \le
C \left\{  \left( \nrse{u - u_k}^2 + \gamma_p \osc_k^2 (u_k, \cT_k) \right)^{-1/2s}  \right.\nonumber\\ 
 & \quad + \left. \left( \nrse{z - z_k}^2 + \gamma_d \osc_k^2 (z_k, \cT_k) \right)^{-1/2t}   \right\}.
 \label{CSL1}
\end{align}
Here we make the usual approximation class assumptions on primal and dual solutions  $u \in \cA_s \an z \in \cA_t$. The second inequality in~\eqref{CSL1} follows from~\eqref{total_equiv_quasi} the equivalence of the total error and quasi-error.  As compared with the analogous result for the standard adaptive method in~\cite{CKNS08}, 
$\# \cT_k - \# \cT_0 \le C  \left( \nrse{u - u_k}^2 + \gamma_p \osc_k^2 (u_k, \cT_k) \right)^{-1/2s} $, the same procedure applied to the goal-oriented method bounds the complexity but fails to produce an optimal bound as is shown for the Laplacian in~\cite{MoSt09}.  We find in our numerical results, however, that our marking and refinement strategy is comparable to that presented in~\cite{MoSt09}.

To show the equivalence of the total error and quasi-error, we start with a fairly standard result that  may be found 
in~\cite{MeNo05} Lemma 3.1 and a similar result in~\cite{MoSt09} Proposition 4.3 and Corollary 4.4. 

\begin{lemma}[Global lower bound]\label{lemma_global_lower}
Let the problem data satisfy Assumption~\ref{data_assumption1s} and the mesh satisfy  Assumption~\ref{mesh_assumption1s}. 
Let $\cT_1, \cT_2 \in \T$ and $\cT_2 \ge \cT_1$ a full refinement. Let $u_k \in \V_k$ the solution to~\eqref{discrete_prima1l}, $k = 1,2$.  Then there is a global constant $c_2 > 0$ such that

\begin{equation}\label{lowerbd}
c_2 \eta_1^2(u_1, \cT_1) \le \nrse{u - u_1}^2 + \tosc_1^2(u_1, \cT_1).
\end{equation} 
\end{lemma}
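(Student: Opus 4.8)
The plan is to prove this efficiency bound by the classical residual/bubble-function technique of Verf\"urth, localizing element-by-element and face-by-face and then summing; the refinement $\cT_2$ and the solution $u_2$ do not appear in the conclusion, so I would work entirely on $\cT_1$ and write $\sqle$ for inequalities up to a constant depending only on the problem data $D$ and the shape regularity of $\cT_0$. The starting point is the elementwise weak residual identity: testing $a(u-u_1,\cdot)$ against $v \in H_0^1(\Omega)$ and integrating the diffusion term by parts on each $T \in \cT_1$, the interior contributions collect into $R(u_1)\rest_T = f + \cL(u_1)$ and the interface contributions into the jumps $J_\sigma(u_1)$, giving
\[
a(u-u_1,v) = \sum_{T \in \cT_1} \int_T R(u_1)\,v - \sum_\sigma \int_\sigma J_\sigma(u_1)\,v,
\]
where the second sum runs over interior true-hyperfaces. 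Continuity of $\forma$ and the norm equivalence $\nrse{\cdot} \simeq \nr{\cdot}_{H^1}$ from \eqref{explicit_coercive}--\eqref{norm_continuity} will be used to bound the left-hand side by $\nr{u-u_1}_{H^1}$ on the relevant patch.

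First I would establish the interior residual bound. Fixing $T$, let $\bar R_T \coloneqq \Pi^2_{2n-2} R(u_1)$ be the $L_2(T)$-projection onto $\PP_{2n-2}(T)$ and let $b_T$ be the standard interior bubble supported in $T$. Testing the identity with $v = \bar R_T b_T \in H_0^1(\Omega)$ annihilates every jump term, since $v$ vanishes on $\pa T$; combining the equivalence of $\nr{\cdot}_{L_2(T)}$ with $(\int_T b_T (\cdot)^2)^{1/2}$ on $\PP_{2n-2}$, the inverse estimate $\nr{\bar R_T b_T}_{H^1(T)} \sqle h_T^{-1}\nr{\bar R_T}_{L_2(T)}$, and the fact that $R(u_1) - \bar R_T = P^2_{2n-2} R(u_1)$ is precisely the oscillation integrand in \eqref{osc}, yields after dividing and a triangle inequality
\[
h_T \nr{R(u_1)}_{L_2(T)} \sqle \nr{u-u_1}_{H^1(T)} + \osc_1(u_1,T).
\]
Next, for an interior face $\sigma = T \cap T'$ with patch $\omega_\sigma = T \cup T'$, I would approximate $J_\sigma(u_1)$ by a polynomial $\bar J_\sigma$, extend it to $\omega_\sigma$, and test with $v = \bar J_\sigma b_\sigma$ for the face bubble $b_\sigma$; the analogous scalings $\nr{v}_{H^1(\omega_\sigma)} \sqle h_\sigma^{-1/2}\nr{\bar J_\sigma}_{L_2(\sigma)}$ and $\nr{v}_{L_2(\omega_\sigma)} \sqle h_\sigma^{1/2}\nr{\bar J_\sigma}_{L_2(\sigma)}$, together with the interior bound just proved, give
\[
h_\sigma^{1/2}\nr{J_\sigma(u_1)}_{L_2(\sigma)} \sqle \nr{u-u_1}_{H^1(\omega_\sigma)} + \osc_1(u_1,\omega_\sigma) + h_\sigma^{1/2}\nr{J_\sigma(u_1) - \bar J_\sigma}_{L_2(\sigma)}.
\]

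Summing both local bounds over all $T$ and all interior faces, using the finite overlap of the patches and the comparability $h_\sigma/h_T \le \bar\gamma^2$ from \eqref{side_element_comp}, assembles $\eta_1^2(u_1,\cT_1)$ on the left and $\nr{u-u_1}_{H^1}^2 + \osc_1^2(u_1,\cT_1)$ on the right; converting $\nr{u-u_1}_{H^1}$ to $\nrse{u-u_1}$ by \eqref{explicit_coercive} then produces the stated inequality. The main obstacle is the jump-oscillation remainder $h_\sigma^{1/2}\nr{J_\sigma(u_1)-\bar J_\sigma}_{L_2(\sigma)}$, and this is the one place where the structure of the coefficient $A$ is essential. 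Because $A$ is globally Lipschitz it is single-valued across $\sigma$, so $J_\sigma(u_1) = n_\sigma \cdot A\,(\grad u_1\rest_T - \grad u_1\rest_{T'})$ with the bracketed difference a polynomial of degree $\le n-1$ on $\sigma$; freezing $A$ at a point $x_T \in T$ and using $|A(x)-A(x_T)| \sqle h_T$ shows that the choice $\bar J_\sigma = n_\sigma \cdot A(x_T)(\grad u_1\rest_T - \grad u_1\rest_{T'})$ makes the remainder higher order and absorbable into the data-oscillation already carried by the definitions. This is exactly the content of the remark in \S\ref{subsec:local_perturb} that, by continuity of $A$, the oscillation need not include a jump term, and it is why Lipschitz continuity of $A$ (rather than mere boundedness) is used. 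With that term absorbed, collecting constants gives the claim with $c_2 = c_2(D,\cT_0) > 0$; the remaining routine details follow \cite{MeNo05} (Lemma~3.1) and \cite{CKNS08}.
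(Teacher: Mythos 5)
The paper does not prove this lemma at all --- it simply cites \cite{MeNo05} (Lemma~3.1) and \cite{MoSt09} (Proposition~4.3), and your element/face bubble-function argument is precisely the standard proof given in those references, correctly adapted here: the interior bubble kills the jump terms, the face bubble reuses the interior bound, and the Lipschitz continuity of $A$ (plus the fact that the tangential part of $\grad u_1$ is continuous across $\sigma$, so the jump is purely normal and controlled by $\mu_0^{-1}\nr{J_\sigma(u_1)}_{L_2(\sigma)}$) makes the remainder $h_\sigma^{1/2}\nr{J_\sigma(u_1)-\bar J_\sigma}_{L_2(\sigma)}$ of order $h_T$ times the jump term itself, hence absorbable using the fineness of $\cT_0$ guaranteed by Assumption~\ref{mesh_assumptions}(3). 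This is correct and consistent with the paper's (delegated) proof, so nothing further is needed.
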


Putting together  the lower bound on total error~\eqref{lowerbd} with domination of the error estimator over the oscillation and  we have the equivalence of the total error and quasi-error
\begin{align}\notag
\nrse{u - u_j}^2 + \gamma_p \osc_j^2 (u_j, \cT_j) & \le \nrse{u - u_j}^2 + \gamma_p \eta_j^2(u_j, \cT_j) \\
\label{total_equiv_quasi}
& \le \left(  1 + \f {\gamma_p} {c_2}\right) E_j^2(u_j, \cT_j).
\end{align}
or
\begin{equation}\label{Qres2}
 E_j^{-1/s}(u_j, \cT_j) \le \left(  1 + \f {\gamma_p} {c_2}\right)^{1/2s}  Q_j^{-1/s}(u_j, \cT_j)
\end{equation}
and similarly for the dual problem 
\begin{equation}\label{dualQres2}
 E_j^{-1/t}(z_j, \cT_j) \le \left(  1 + \f {\gamma_d} {c_2}\right)^{1/2t}  Q_j^{-1/t}(z_j, \cT_j).
\end{equation}

The complexity bound relies on the approximation class assumption $u \in \cA_s$.  We refer the reader to the discussion of the class $\cA_s$ and related approximation classes in~\cite{CKNS08}.
These results imply optimality of the method in the sense of global error convergence, but not optimality in the sense of convergence to the goal functional.
However, our numerical results below appear to demonstrate this behavior with respect to goal convergence as well.

\section{Numerics}
\label{sec:numerics}

We demonstrate the performance of our method (HP) by comparing it with the two most relevant methods: the dual weighted residual method, DWR;  and the method in~\cite{MoSt09}, referred to here as MS which uses the same residual-based indicator as HP but with a different strategy for adaptive marking.  Our results indicate that on a wide variety of convection dominated problems, our method performs as well or in some cases better than the other two methods.  We also show a number of the adaptive meshes below, demonstrating that in many cases, the compared methods produce similar performance from qualitatively different adaptive refinements.  A discussion of the DWR method may be found in~\cite{BaRa03,Becker.R;Rannacher.R1996a,EHL02,Giles.M;Suli.E2003,Gratsch.T;Bathe.K2005,EHM01} for example.  In our DWR implementation, the finite element space for the primal problem 
$\V_{\cT_k}$ employs linear Lagrange elements as do HP and MS for both the primal and dual spaces.  For DWR, the dual finite element space $\V_{\cT_k}^2$ uses quadratic Lagrange elements. The DWR indicator estimates the influence of the dual solution on the primal residual. Elementwise
\begin{equation*}
\eta_{\cT_k} (v,T) \coloneqq \langle R(v), z^2 - I_k z^2\rangle_T  + \f 1 2  \langle J_T(v), z^2 - I_k z^2 \rangle_{\pa T},   \quad v \in \V_{\cT_k},
\end{equation*}
where $z^2 \in \V_{\cT_k}^2$ is the dual solution and $I_k$ is the interpolator onto $\V_{\cT_k}$. The error estimator is  the absolute value of the sum of indicators  
\begin{align*}
\eta_k =  \left| \sum_{T \in \cT_k} \eta_T(u_k,T) \right| \le\sum_{T \in \cT_k}  \left| \eta_T(u_k,T)\right|.  
\end{align*}
Both HP and MS use the residual based indicators described in this paper.

In the adaptive algorithms, we use the D\"orfler marking strategy with  $\theta = 0.6$ for all the methods. Our numerical experiments are implemented 
using FETK~\cite{Hols2001a},
which is a fairly standard set of finite element modeling libraries for 
approximating the solutions to systems of linear and nonlinear elliptic and parabolic 
equations.

We consider the convection dominated problem
\[
a(u,v) \coloneqq \f 1 {1000}\langle \grad u , \grad v\rangle + \langle b\cdot \grad u, v \rangle = f(v),
\]
with $b = (y, \f 1 2 - x)^T$. The goal function is $g(u) = \int g(x,y) u$ and 
\[
{g(x,y) = 500\exp(-500((x-x_1)^2 + (y - y_1)^2))}, \quad \text{ with }~(x_1,y_1) = (0.9,0.675). 
\]
The load function $f$ is chosen so that the exact solution $u$ is of the form
\[
u = \sin(\pi x) \sin (\pi y) ( 2((x - x_0)^2 +  (y - y_0))^2 + 10^{-3})^{-1}.
\]
We show results the the domain $\Omega = (0,1)^2 \setminus(1/3, 2/3)^2$ for four sets of parameters
\begin{align}
Figure~\ref{fig:HSD_311}  && x_0 = 0.1 &&  y_0 = 0.1 \label{HSD_311}, \\
				        && x_0 = 0.7 &&  y_0 = 0.1 \label{HSD_371}. \\
Figure~\ref{fig:HSD_317}  && x_0 = 0.1 &&  y_0 = 0.7 \label{HSD_317}. \\
Figure~\ref{fig:HSD_377}  && x_0 = 0.7 &&  y_0 = 0.7 \label{HSD_377}.
\end{align}
In these four examples, we see the effect of moving the primal data far from both the dual solution and the spike in the dual data, then colliding with the dual solution but remaining far from the dual spike and finally in close vicinity to the spike in the goal data $g(x,y)$. We start with an initial mesh of 128 elements. The numbers of iterations shown in each problem are selected to compare the error in each method with similar numbers of elements in each mesh.  HP and MS use linear Lagrange elements for both primal and dual finite element spaces whereas our implementation of DWR uses quadratic Lagrange elements for the dual finite element space and linears for the primal.   
\begin{figure}
\includegraphics[width=0.45\textwidth]{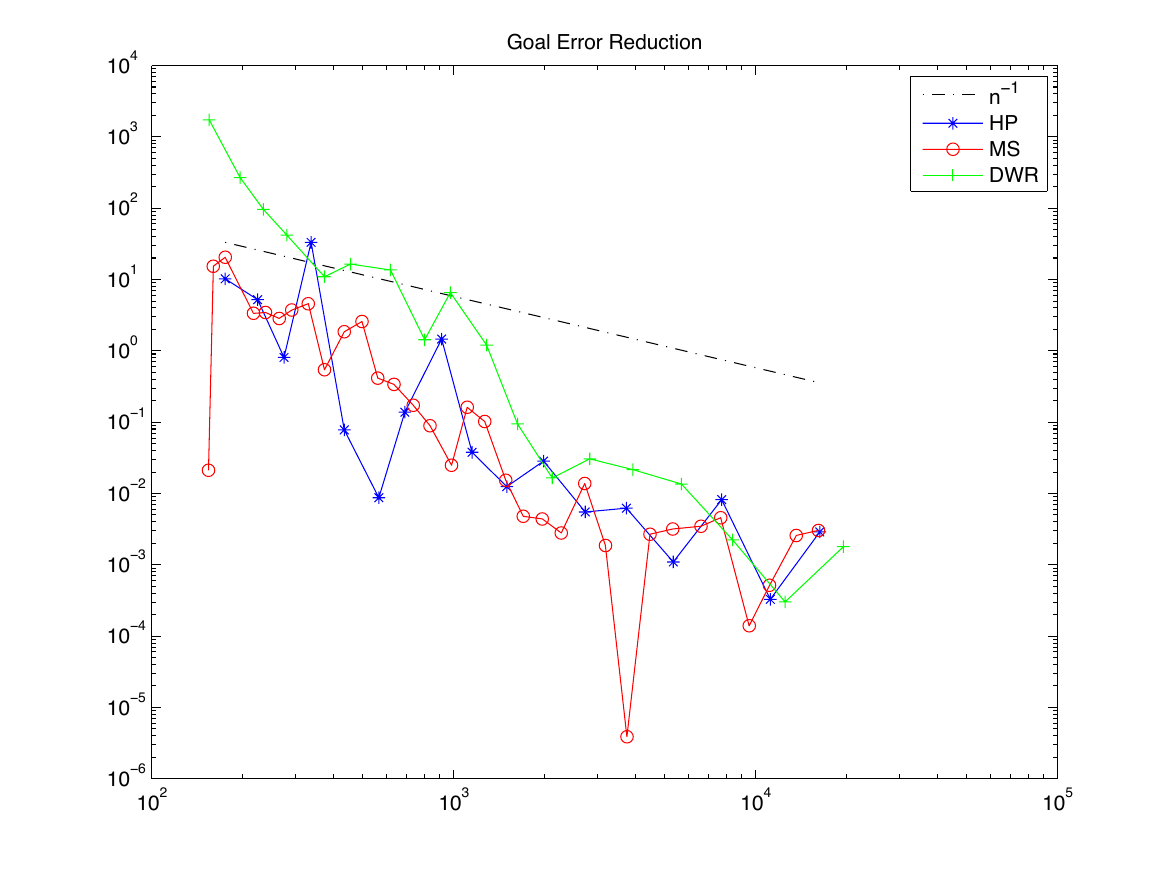}~
\includegraphics[width=0.45\textwidth]{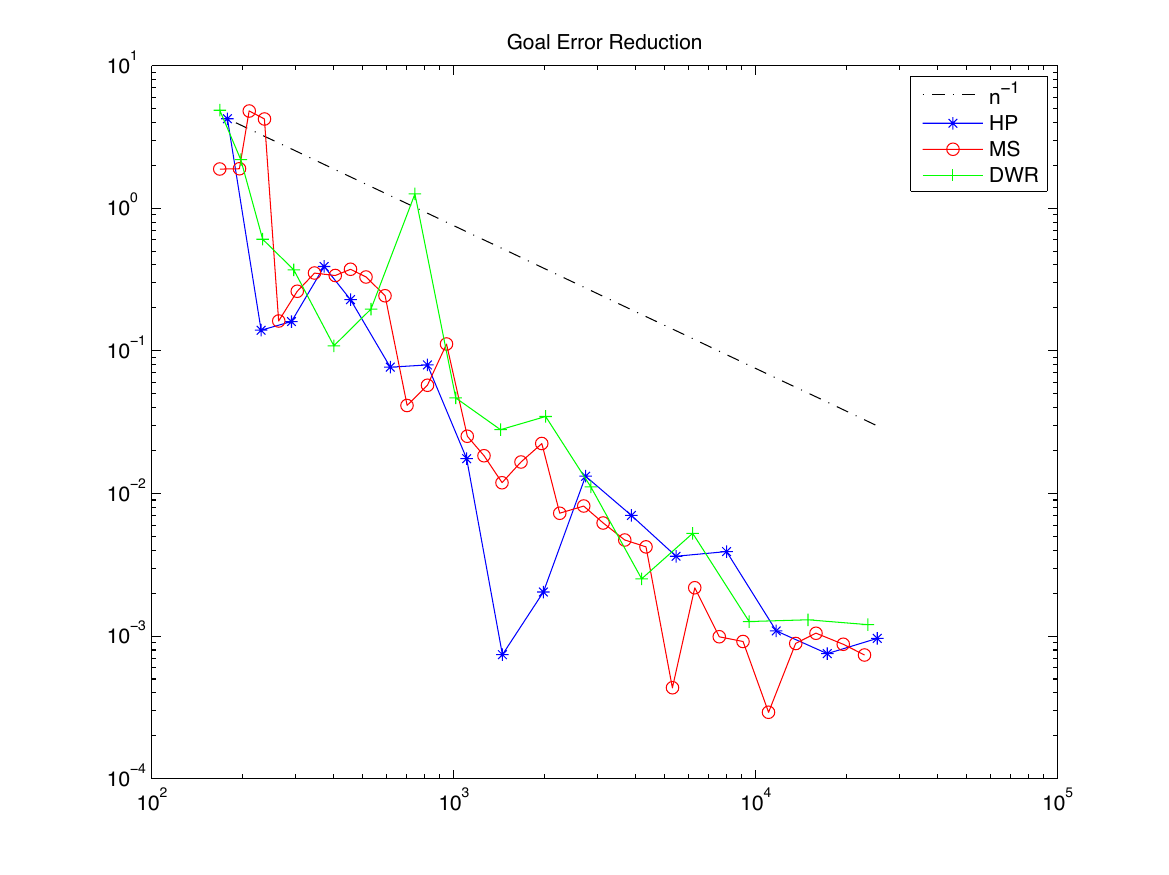}
\caption{Left: goal error after 18 HP, 36 MS and 19 DWR iterations for problem~\ref{HSD_311}, compared with $n^{-1}$.  Right: goal error after 18 HP, 36 MS and 17 DWR iterations for problem~\ref{HSD_371}, compared with $n^{-1}$. }
\label{fig:HSD_311}
\end{figure}
\begin{figure}
\includegraphics[width=0.3\textwidth]{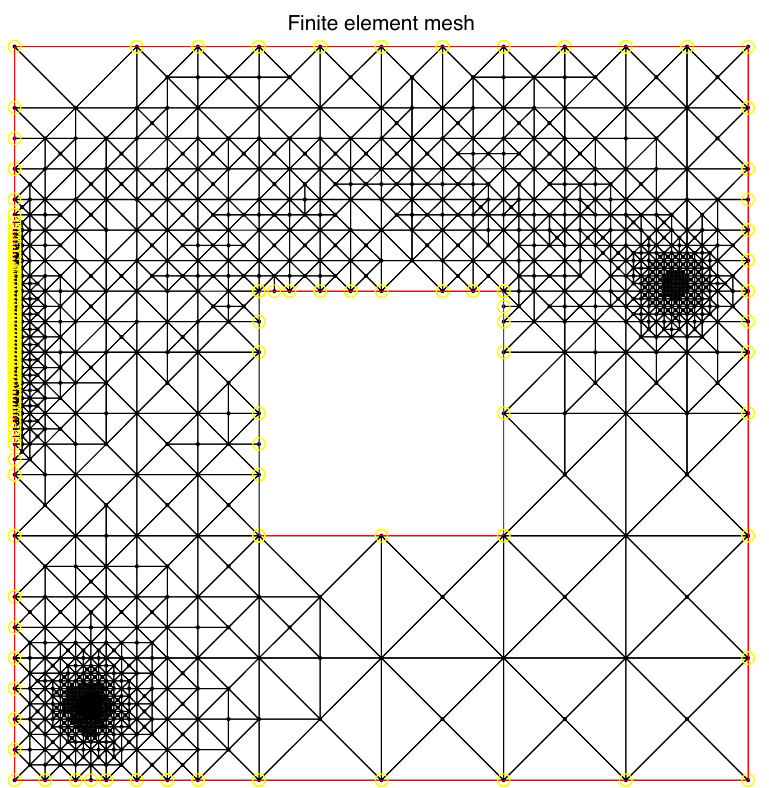}~
\includegraphics[width=0.3\textwidth]{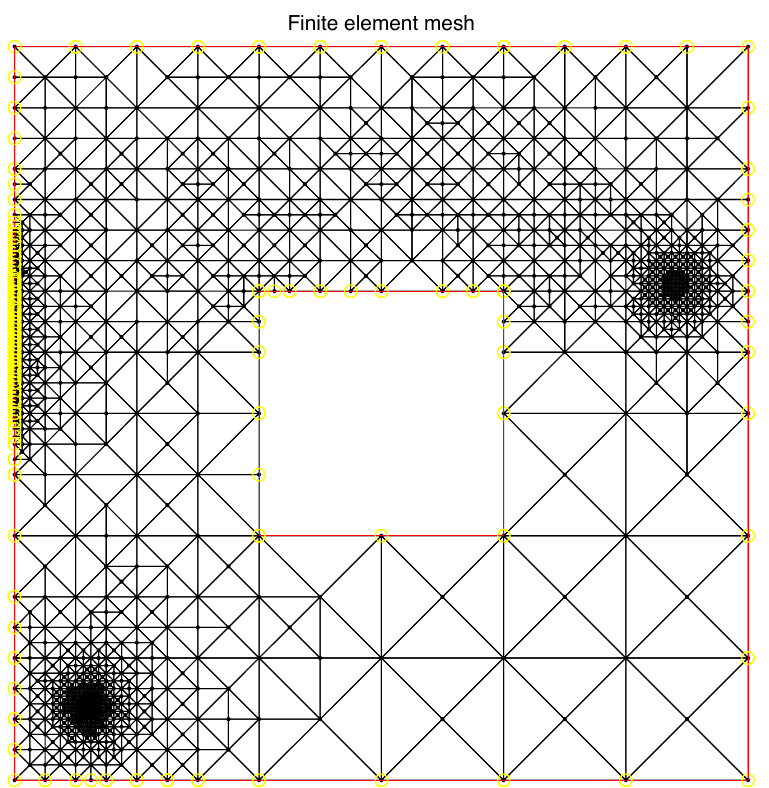}~
\includegraphics[width=0.3\textwidth]{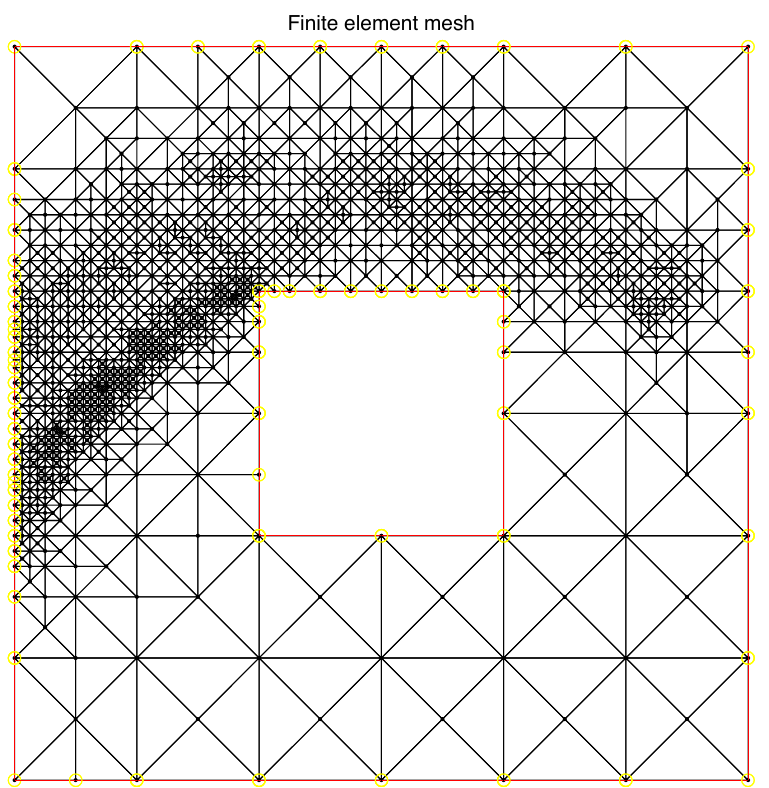}~
\caption{Left: 13 iterations of HP (2695 elements). Center: 26 iterations of MS (2715) elements). Right: 14 iterations of DWR (3045 elements) for problem~\ref{HSD_311}.  }
\label{fig:mesh_311}
\end{figure}

In the first two problems~\ref{HSD_311} and~\ref{HSD_371}, where the primal spike is far from both the dual spike and the dual solution, we see all three methods produce comparable reduction in the goal error shown in Figure~\ref{fig:HSD_311}; however, the residual based and DWR methods adaptively refine the mesh in qualitatively different ways. Figure~\ref{fig:mesh_311} shows comparable stages of mesh refinement for problem~\ref{HSD_311}.  DWR focuses refinement on the dual solution with the highest concentration of refinement along the side of the dual solution closest to the primal spike.  HP and MS both concentrate refinement on the primal and dual spikes as well as the interaction between the dual solution and the boundary.  HP and MS produce similar mesh refinements, and have a similar rate of error reduction compared to the number of mesh elements; however, MS takes generally twice as many iterations to obtain the same goal error as does HP.  As discussed in~\cite{BET11}, the increase in the number of iterations is a practical disadvantage as the code takes longer to run.

\begin{figure}
\includegraphics[width=0.6\textwidth]{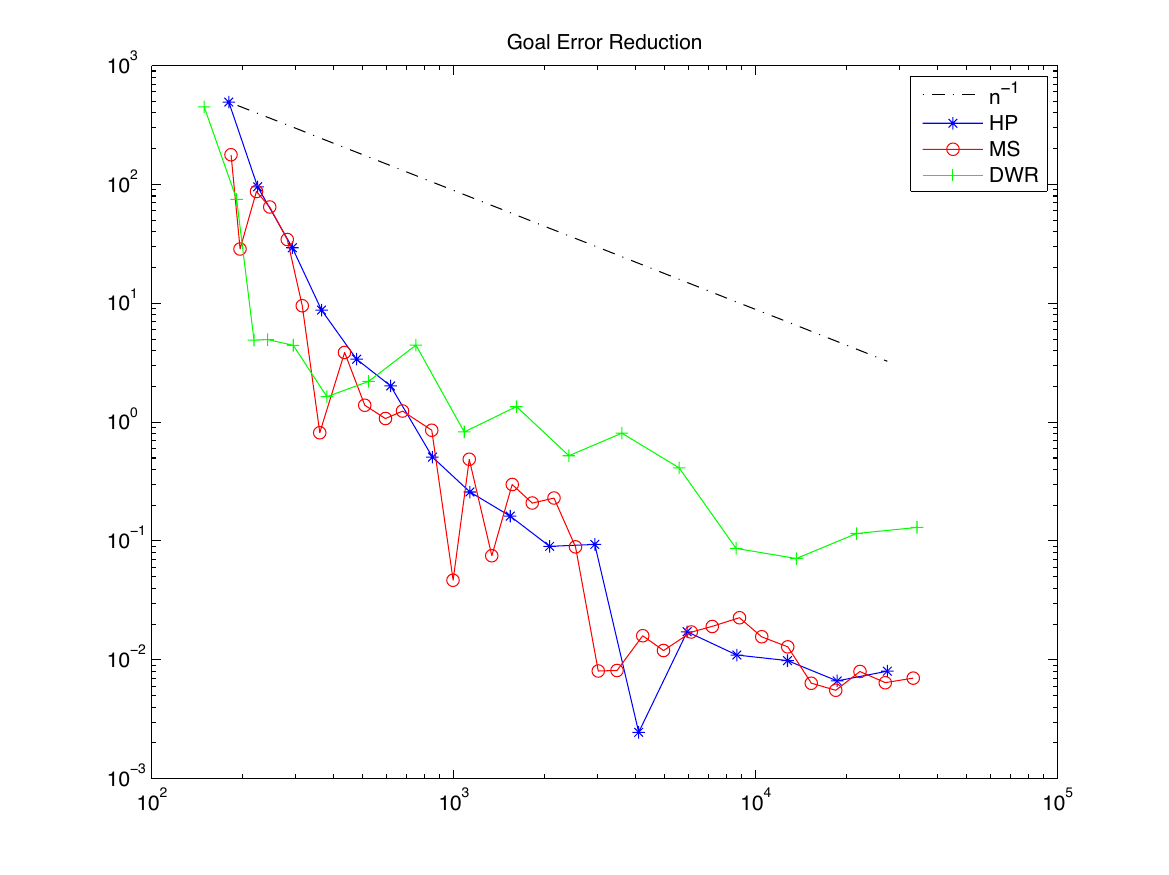}
\caption{Goal error after 18 HP, 36 MS and 18 DWR iterations for problem~\ref{HSD_317}, compared with $n^{-1}$.}
\label{fig:HSD_317}
\end{figure}
\begin{figure}
\includegraphics[width=0.4\textwidth]{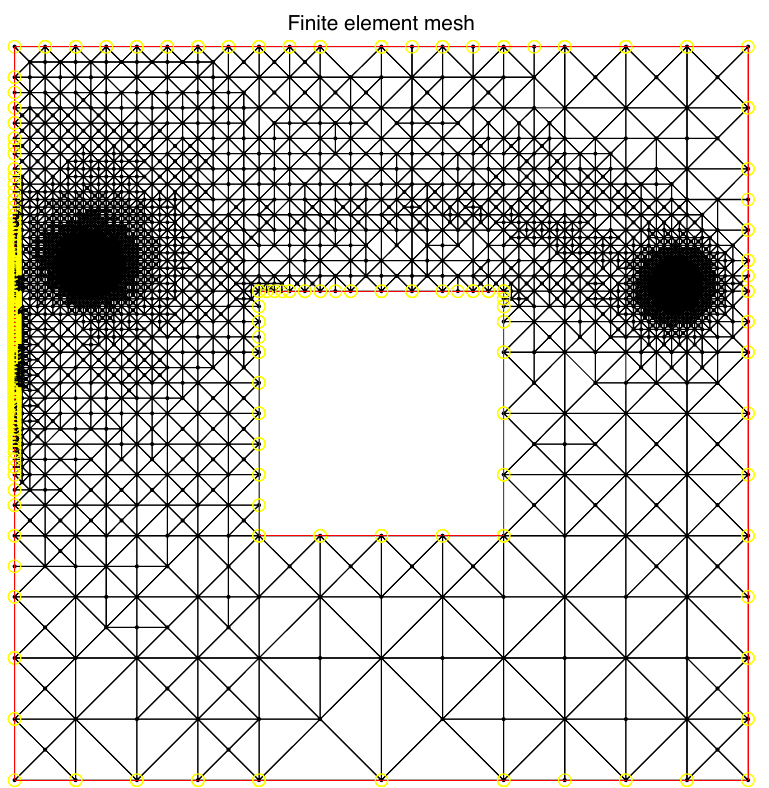}
\includegraphics[width=0.4\textwidth]{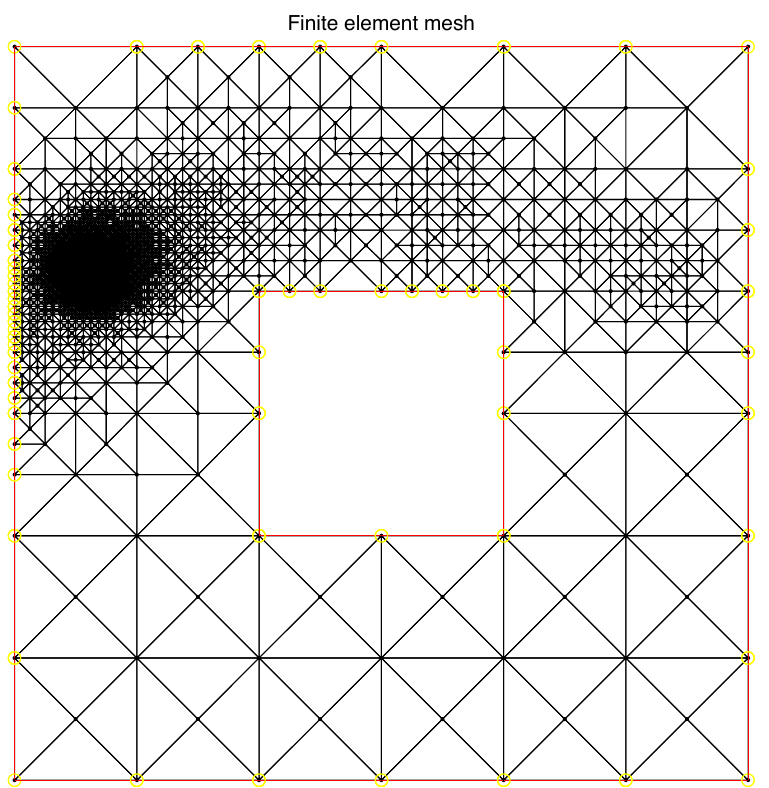}
\caption{Left: 15 iterations of HP (8667 elements). Right: 15 iterations of DWR (8621 elements) for problem~\ref{HSD_317}.  }
\label{fig:mesh_317}
\end{figure}

In the second problem~\ref{HSD_317} where the spikes in the primal and dual data are remote, but the dual solution collides with the primal data, the residual based methods both outperform DWR.  As seen in Figure~\ref{fig:mesh_317}, HP refines for both primal and dual data spikes as well as the boundary interacting with the dual solution.  DWR concentrates refinement on the interaction of the primal and dual solution which captures the spike in the primal data, but neglects  to refine either for the dual data spike, or the area surrounding the primal spike lying outside the path of the dual solution.  This problem is further investigated with respect to the strength of the diffusion term in problems~\ref{HSD_d1317}~-~\ref{HSD_d5317}, from which we see the area surrounding the primal spike is of greater importance.
\begin{figure}
\includegraphics[width=0.6\textwidth]{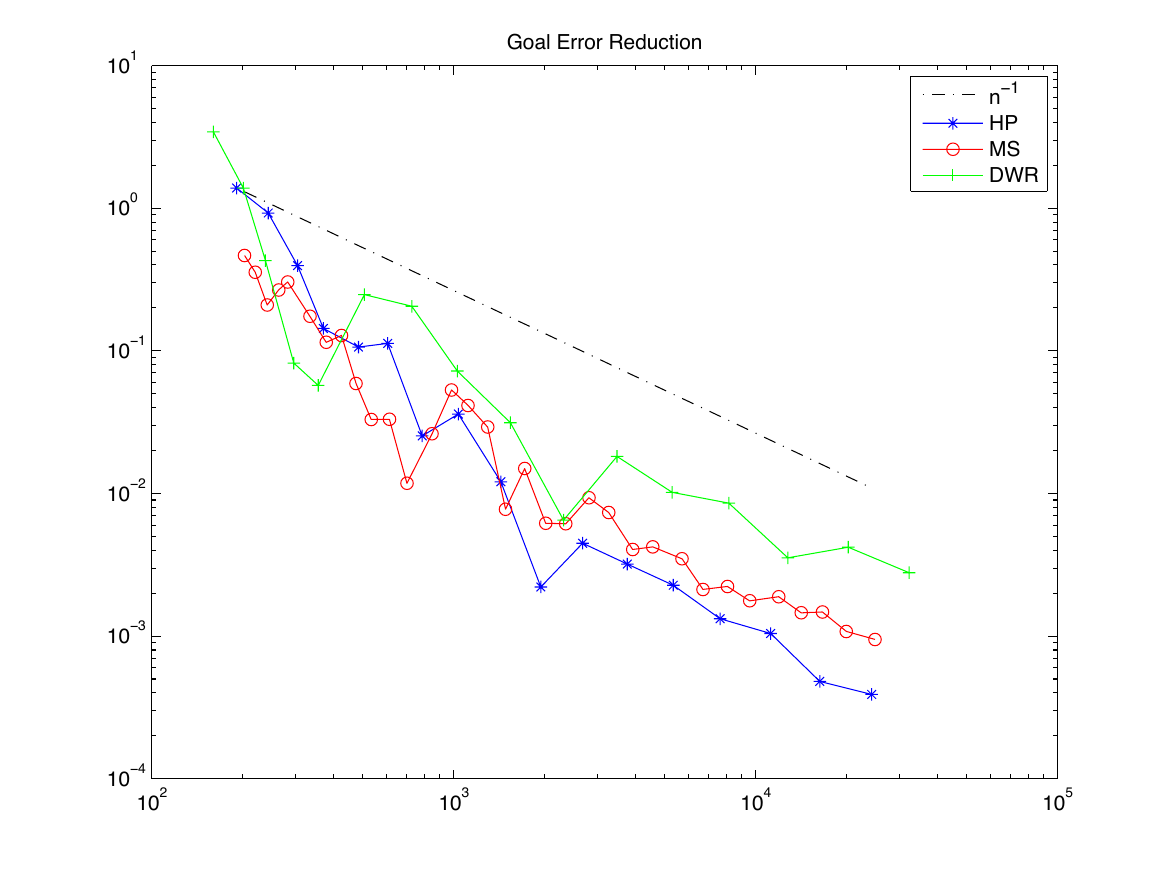}
\caption{Goal error after 18 HP, 36 MS and 17 DWR iterations for problem~\ref{HSD_377}, compared with $n^{-1}$.}
\label{fig:HSD_377}
\end{figure}
\begin{figure}
\includegraphics[width=0.4\textwidth]{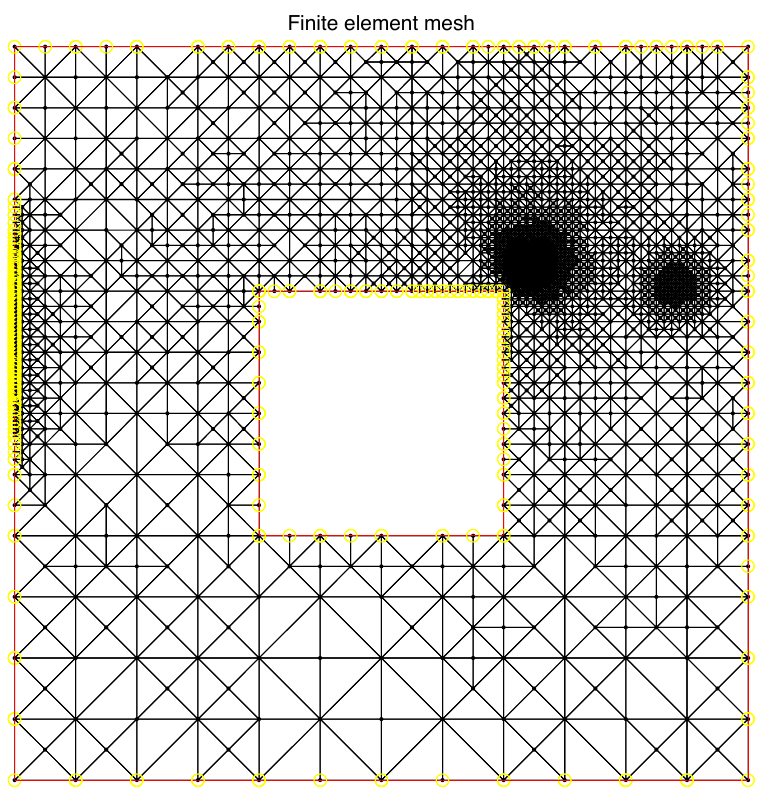}
\includegraphics[width=0.4\textwidth]{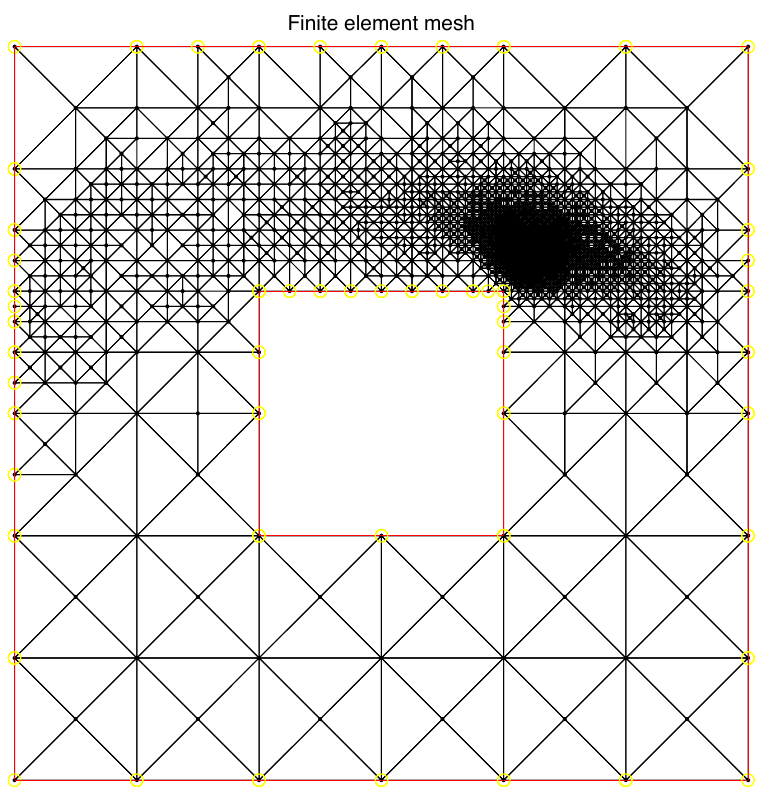}
\caption{Left:  14 iterations of HP (5332 elements). Right: 13 iterations of DWR (5292 elements) for problem~\ref{HSD_377}. }  
\label{fig:mesh_377}
\end{figure}

Problem~\ref{HSD_377} places the primal and dual spikes in close vicinity. As seen in Figures~\ref{fig:HSD_377} and~\ref{fig:mesh_377}, the residual based methods and HP in particular outperform the DWR method even when the primal and dual spikes are close.  Restricting refinement to the close vicinity of the dual solution, it appears in this convection problem the DWR indicator is missing important information about the structure of the primal data that the residual based methods do a better job of capturing in their adaptive refinement.

In the next four problems we change the scale of the diffusion term in problem~\ref{HSD_317} to investigate the relative performance of the three methods when dominance of the convection term is either increased or decreased.

\begin{align}
Figure~\ref{fig:HSD_d12317} && a(u,v) \coloneqq 10^{-1}\langle \grad u , \grad v\rangle 
+ \langle b\cdot \grad u, v \rangle  \label{HSD_d1317}, \\
 && a(u,v) \coloneqq 10^{-2}\langle \grad u , \grad v\rangle 
+ \langle b\cdot \grad u, v \rangle  \label{HSD_d2317}. \\
Figure~\ref{fig:HSD_d45317} && a(u,v) \coloneqq 10^{-4}\langle \grad u , \grad v\rangle 
+ \langle b\cdot \grad u, v \rangle \label{HSD_d4317}, \\
&& a(u,v) \coloneqq 10^{-5}\langle \grad u , \grad v\rangle 
+ \langle b\cdot \grad u, v \rangle \label{HSD_d5317}.
\end{align}

\begin{figure}
\includegraphics[width=0.45\textwidth]{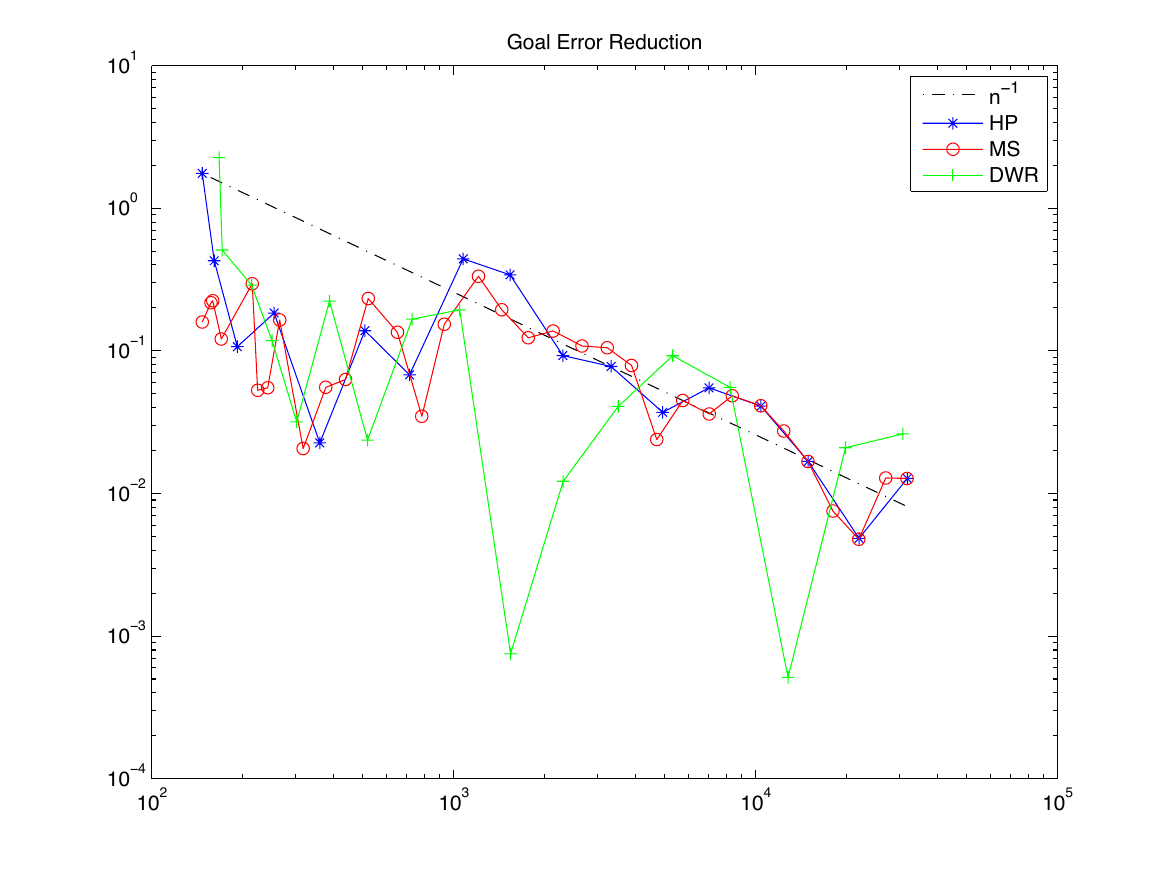}~
\includegraphics[width=0.45\textwidth]{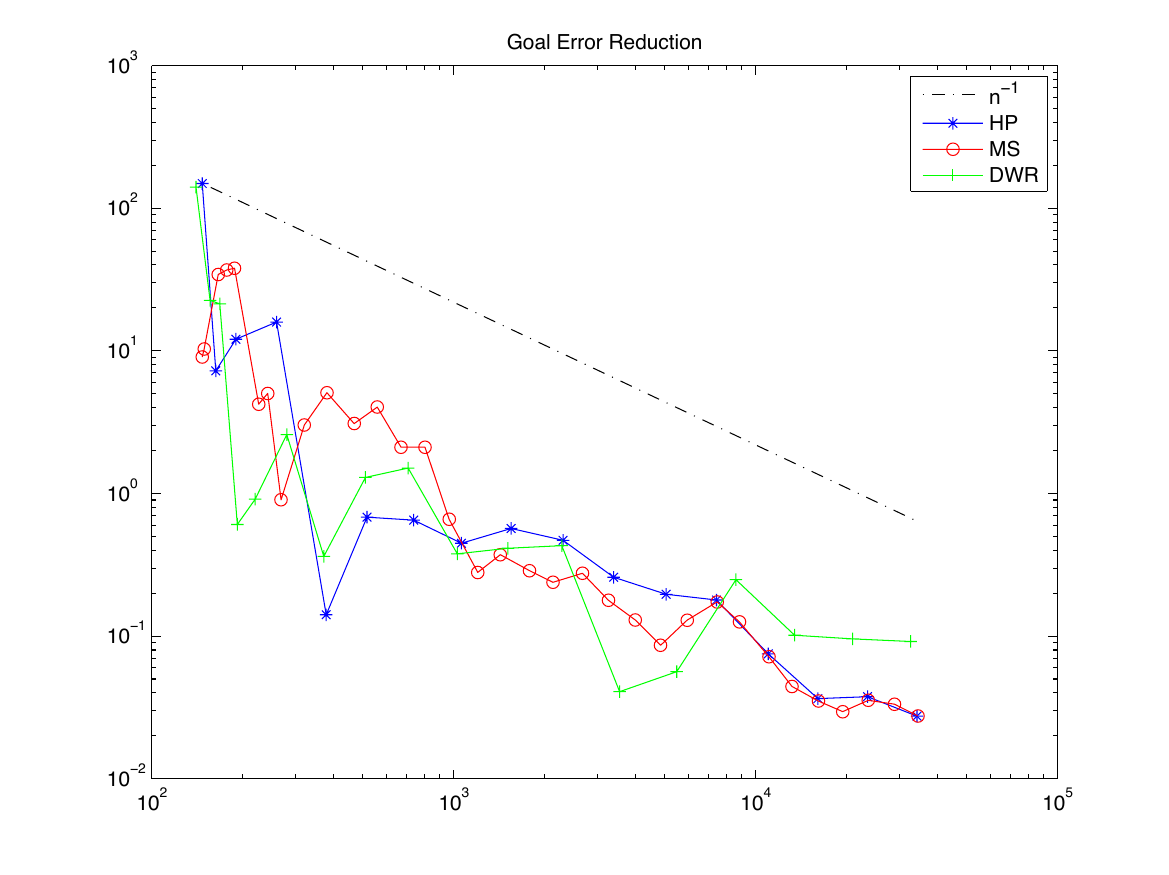}~
\caption{Left: Goal error after 18 HP, 36 MS iterations and18 DWR iterations for problem~\ref{HSD_d1317}, compared with $n^{-1}$. Right: Goal error after  18 HP, 36 MS and 19 DWR iterations compared with $n^{-1}$.}
\label{fig:HSD_d12317}
\end{figure}

Figure~\ref{fig:HSD_d12317} shows the reduction in goal error for problems~\ref{HSD_d1317} and ~\ref{HSD_d2317}, in which the dominance of the convection term has been decreased.   The graph on the left shows HP with a mesh of 31806 elements, MS with 31765 elements and DWR with 30748 elements.  
The graph on the right shows HP with a mesh of 34249 elements, MS with 34473 elements and DWR with 32580 elements.
We see as the diffusion coefficient in increased, the three methods show comparable performance, but as the diffusion coefficient is decreased as in problem~\ref{HSD_d2317}, the residual based methods start to perform better approaching the asymptotic regime.  

\begin{figure}
\includegraphics[width=0.45\textwidth]{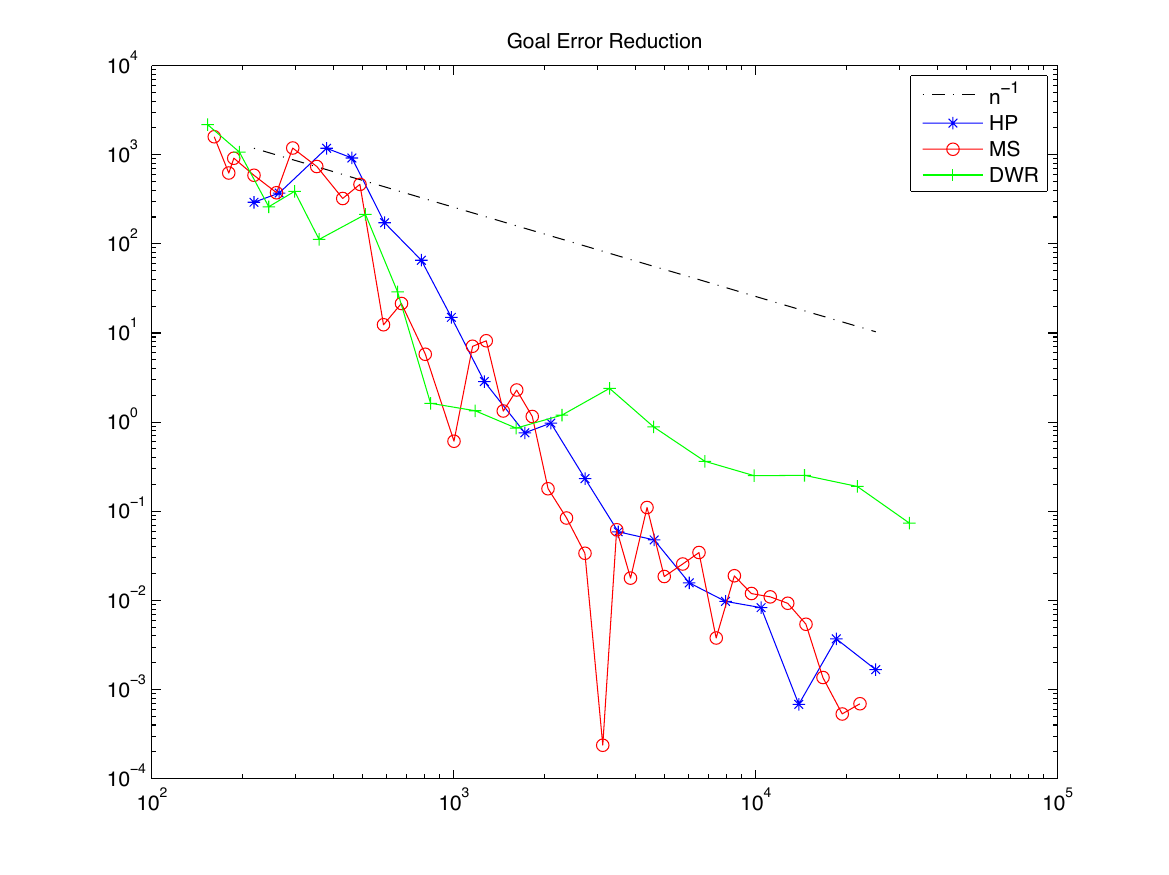}~
\includegraphics[width=0.45\textwidth]{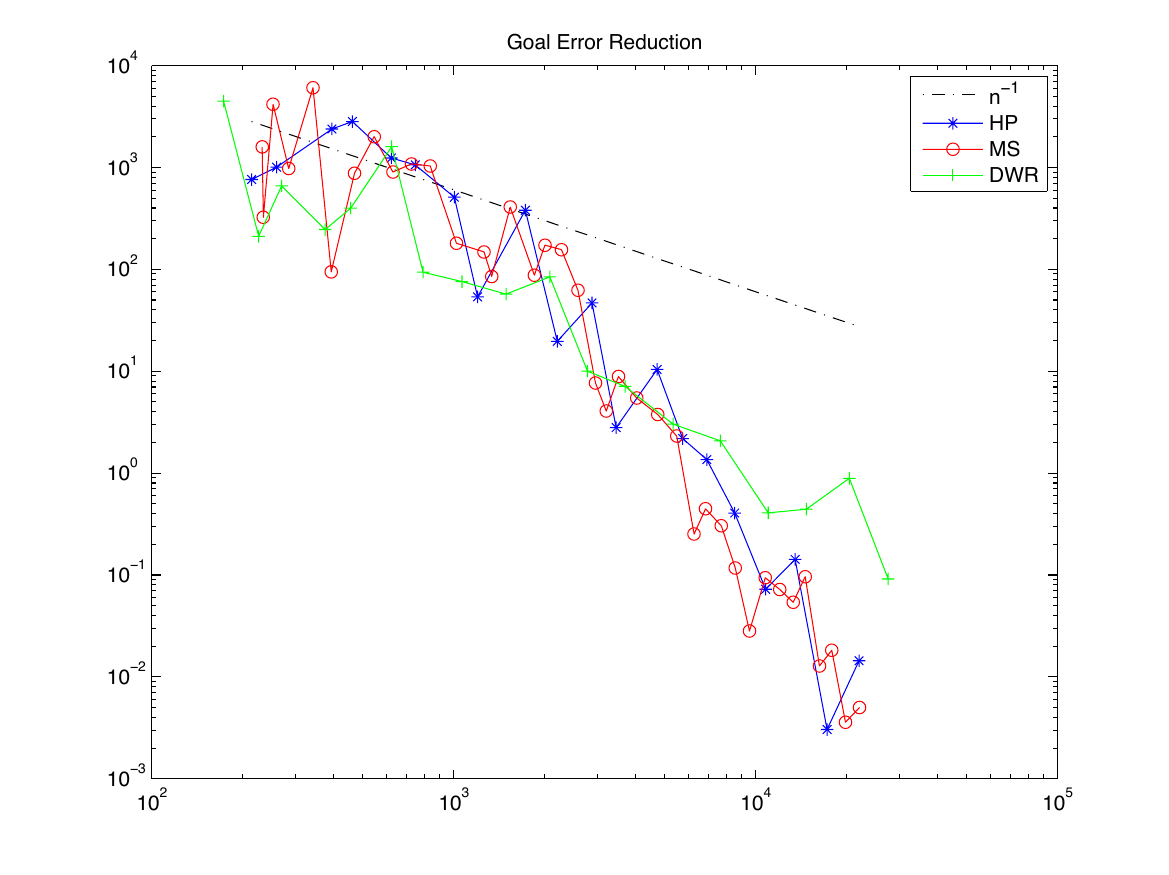}~
\caption{Left: Goal error after 20 HP, 40 MS iterations and19 DWR iterations for problem~\ref{HSD_d4317}, compared with $n^{-1}$. Right: Goal error after 21 HP, 41 MS and 19 DWR iterations for problem~\ref{HSD_d5317} compared with $n^{-1}$.}
\label{fig:HSD_d45317}
\end{figure}

\begin{figure}
\includegraphics[width=0.4\textwidth]{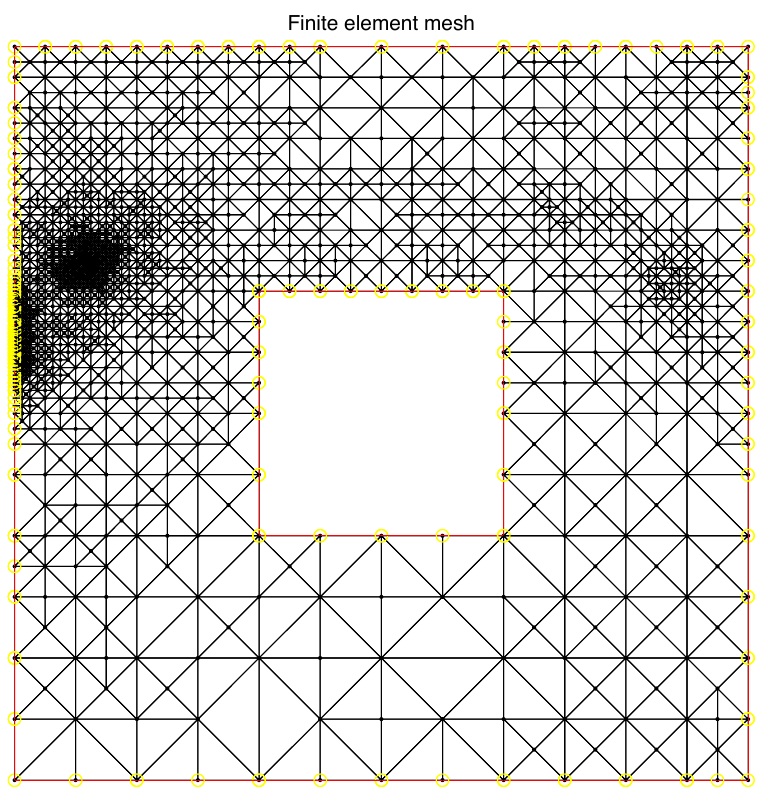}
\includegraphics[width=0.4\textwidth]{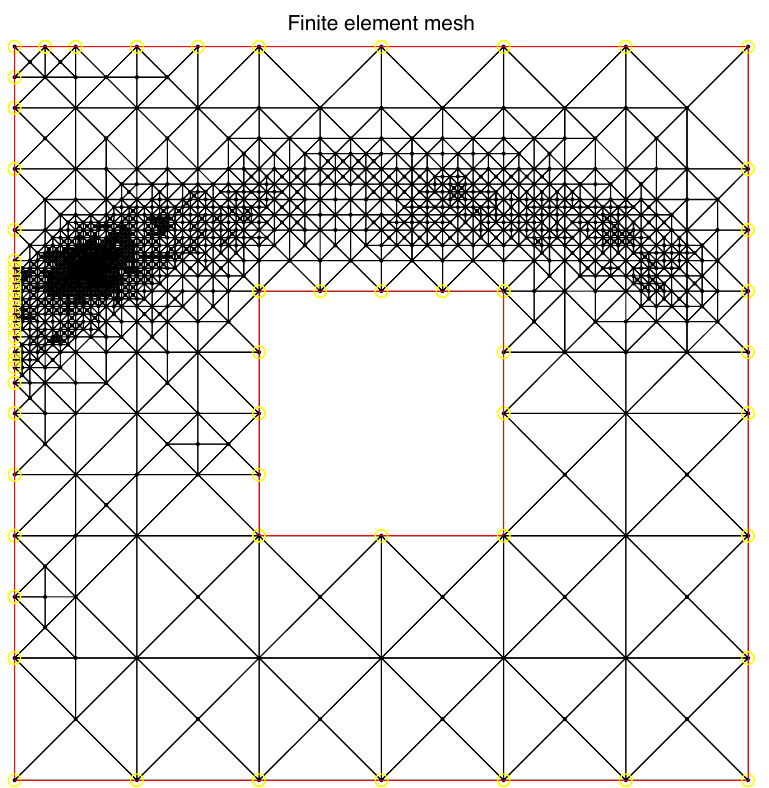}
\caption{Left:  13 iterations of HP (3507 elements). Right: 13 iterations of DWR (3281 elements) for problem~\ref{HSD_d4317}. }  
\label{fig:mesh_d4317}
\end{figure}

This trend is continued as the diffusion coefficient is decreased: all methods show a comparable performance for the first few iterations, then the residual based methods perform better than DWR as the number of mesh refinements increases.
Figure~\ref{fig:HSD_d45317} shows the reduction in goal error for problems~\ref{HSD_d4317} and ~\ref{HSD_d5317}.   The graph on the left shows HP with a mesh of 24988 elements, MS with 22150 elements and DWR with 32315 elements.
The graph on the right shows HP with a mesh of 21991 elements, MS with 22087 elements and DWR with 27493 elements.  As the diffusion term is decreased, the refinement for the DWR method is confined to a tighter band following the dual solution across the domain, allowing for insufficient development in the structure of the primal problem, as in Figure~\ref{fig:mesh_d4317}.  Unlike problem~\ref{HSD_317} with its lesser degree of convection dominance, the HP mesh for problem~\ref{HSD_d4317} shows a greater degree of refinement in the area surrounding the primal spike which is apparently important information for an accurate approximation of $g(u)$.

\section{Conclusion}
\label{sec:conc}

In this article we developed convergence theory for a class of 
goal-oriented adaptive finite element methods for 
second order nonsymmetric linear elliptic equations.
In particular, we established contraction and quasi-optimality results 
for a method of this type for the elliptic problem~\eqref{eqn:strong-eqn1}--\eqref{eqn:strong-b1c}
with $A$ Lipschitz, almost-everywhere symmetric positive definite (SPD),
with $b$ divergence-free, and with $c \ge 0$.
We first described the problem class in some detail,
with a brief review of conforming finite element 
discretization and error-estimate-driven adaptive finite element methods (AFEM).
We then described a goal-oriented variation of standard AFEM (GOAFEM).
Following the recent work of Mommer and Stevenson~\cite{MoSt09} for 
symmetric problems, we established contraction of GOAFEM.
We also showed convergence in the sense of the goal function.
Our analysis approach was signficantly different from that of Mommer and
Stevenson~\cite{MoSt09}, and involved the combination of
the recent contraction frameworks of Cascon, Kreuzer, Nochetto and Siebert ~\cite{CKNS08},
Nochetto, Siebert and Veeser~\cite{NSV09},
and of Holst, Tsogtgerel and Zhu~\cite{HTZ09a}.
Our numerical experiments demonstrate that our choice of marking strategy, while different from the choice used in~\cite{MoSt09} to show optimal complexity for the Laplacian, performs as well as that method on convection-diffusion problems over a wide range of convection dominance.  Our comparison to the standard goal-oriented strategy DWR shows that even for linear problems, the residual based indicators can be as effective and in some cases even outperform the dual weighted residual method when the D\"orfler marking strategy with the same parameter is used for all methods.  We emphasize that our comparison is based on the error in the goal function vs. the number of elements in the mesh. The DWR method is implemented with quadratic basis functions in the dual finite element space; however, the increase in degrees of freedom for that method is not indicated in our plots, nor has it in these examples given  the DWR method an advantage over the residual based methods.

Problems that were not yet addressed include allowing for
jump discontinuities in the diffusion cofficient, and allowing
for lower-order nonlinear terms.
We will address these aspects in a future work.

\section{Appendix}
\label{subsec:duality}
{\bf Duality.}
We include an appendix discussion of the duality argument used in the quasi-orthogonality estimate in an effort to make the paper more self-contained.

Let $u$ the variational solution to~\eqref{primal_proble1m} and $u_1 \in \V_1$ the Galerkin solution to~\eqref{discrete_prima1l}. Assume for any $g \in L_2(\Omega)$ the solution $w$ to the dual problem~\eqref{dual_proble1m} belongs to $H^2(\Omega) \cap H_0^1(\Omega)$ and
\begin{equation}\label{elliptic_reg0A}
|w|_{H^2(\Omega)} \le K_R \nr g_{L_2(\Omega)}.
\end{equation}
Then
\begin{equation}\label{duality_res0A}
\nr{u - u_1}_{L_2} \le C h_0 \nrse{u - u_1}.
\end{equation}

If $w \in H^{2}_{\text{loc}}(\Omega) \cap H_0^1(\Omega)$ but $w \notin H^2(\Omega)$ due to the angles of a nonconvex polyhedral domain $\Omega$ then $w \in H^{1+s}$ for some $0 < s < 1$ where $s$ depends on the angles of $\pa \Omega$.   Assume in this case for any $g \in L_2$
\begin{equation}\label{elliptic_reg1A}
|w|_{H^{1+s}(\Omega)} \le K_R \nr g_{L_2(\Omega)}
\end{equation}
then
\begin{equation}\label{duality_re1A}
\nr{u - u_1}_{L_2} \le C h_0^s \nrse{u - u_1}.
\end{equation}

As discussed in~\cite{Ci02},~\cite{Evans98} and~\cite{AOB01} the regularity assumptions are reasonable based on the continuity of the diffusion coefficients $a_{ij}$ and the convection and reaction coefficients $b_i$ and $c$ in $L_\infty(\Omega)$.

\iic{Proof of~\eqref{duality_res0A}:}  The proof follows the duality arguments in~\cite{AOB01} and~\cite{BS08}.

Let $w \in H_0^1(\Omega)$ the solution to the dual problem
\begin{equation}\label{duality_dual}
a^\ast(w,v) = \langle u - u_1, v \rangle, \quad v \in H_0^1(\Omega).
\end{equation}

Let $\cI^h$ a global interpolator based on refinement $\cT_1$.  Assume $\cI^h w$ is  $C^0$ and the corresponding shape functions have approximation order $m$.  For $m = 2$
\begin{equation}\label{interpolation_estA}
\nr{w - \cI^h w}_{H^1} \le C_\cI h_{\cT_1} |w|_{H^2.}
\end{equation}

As discussed in~\cite{AOB01} the interpolation estimate over reference element $\hat T$ follows from  the Bramble-Hilbert lemma applied to the bounded linear functional $f(\hat u) = \langle \hat u - \cI^h \hat u, \hat v \rangle$ where $\hat v \in H^t(\hat T)$ is arbitrary then set to $\hat u - \cI^h \hat u$.  The  Sobolev semi-norms for $t = 0,1$ over elements $T \in \cT$ are bounded via change of variables to the reference element.  Summing over $T \in \cT$ and combining semi-norms into a norm estimate establishes ~\eqref{interpolation_estA}.

By~\eqref{elliptic_reg0A} we have the bound
\begin{equation}\label{elliptic_regA}
|w|_{H^2} \le K_R \nr {u - u_1}_{L_2}.
\end{equation}

By the identity $a(v,y) = a^\ast(y,v)$ write the primal form of the variational problems
\begin{align}
\label{primal1A}
a(u,v) & = f(v), \quad v \in H_0^1(\Omega) \\
\label{primal2A}
a(u_1,v) & = f(v), \quad v \in \V_1 \\
\label{primal3A}
a(v,w) & =  \langle u - u_1, v \rangle, \quad v \in H_0^1(\Omega).
\end{align}

Taking $v = u - u_1 \in H_0^1$ in~\eqref{primal3A}
\begin{equation}\label{Du_est1A}
a(u - u_1, w) = \langle u - u_1, u - u_1 \rangle = \nr{u - u_1}^2_{L_2}.
\end{equation}

Combining~\eqref{primal1A} and~\eqref{primal2A} we have the Galerkin orthogonality result
\begin{equation}\label{Du_est2A}
a(u - u_1, v) = 0, \quad v \in \V_1.
\end{equation}

Then by~\eqref{Du_est1A} and~\eqref{Du_est2A} noting the interpolant of the dual solution $\cI^h w \in \V_1$
\begin{equation}\label{Du_est3Ap}
\nr{u - u_1}^2_{L_2} = a(u - u_1,w) = a(u - u_1, w - \cI^h w).
\end{equation}

Starting with~\eqref{Du_est3Ap} and applying continuity~\eqref{continuit1y}, interpolation estimate~\eqref{interpolation_estA} and elliptic regularity~\eqref{elliptic_regA}
\begin{align*}
\nr{u - u_1}_{L_2}^2 & \le  M_c \nr{u - u_1}_{H^1} \nr{w - \cI^h w}_{H^1} \\
& \le  M_c  \nr{u - u_1}_{H^1} C_\cI h_{\cT_1} |w|_{H^2}\\
& \le  K_R M_c C_\cI  h_{0} \nr{u - u_1}_{H^1}  \nr{u - u_1}_{L_2}.
\end{align*}

Canceling one factor of $\nr{u - u_1}_{L_2}$ and applying coercivity~\eqref{coerciv1e}
\begin{equation}\label{Du_est4A}
\nr{u - u_1}_{L_2} \le \f{M_c}{m_{\cE}} C_\cI  K_R h_0  \nrse{u - u_1}.
\end{equation}

Depending on the regularity of the boundary $\pa \Omega$ the solution $w$ may have less regularity: $w \in H^2_{\text{loc}(\Omega)}$ but $w \notin H^2(\Omega)$.  In particular, we may have $w \in H^{1+s}$ for some $s \in (0,1)$.  In that case obtain the more general estimate
\[
\nr{w - \cI^h w}_{H^1} \le \tilde C_\cI h_{0}^s|w|_{1+s}
\]
yielding
\[
\nr{u - u_1}_{L_2} \le \f{M_c}{m_{\cE}} \tilde C_\cI K_R h_0^s  \nrse{u - u_1}.
\]

The value of $s$ is found by considering all corners of boundary $\pa \Omega$.  Writing the interior angle at each corner by $\omega = \pi / \alpha$ it holds for  $\alpha > 0$ and arbitrary $\eps > 0$
\[
\omega = \pi / \alpha \implies w \in H^{1 + \alpha - \eps}
\]

and  if $\pi/(p_j+1) \le \omega \le \pi/p_j$ for a set of integers $p_j$ characterizing the corners of $\pa \Omega$
\[
\nr{w - \cI^h w}_{H^1} \le C h^s |w|_{1+s}
\]
 where $s = \min\{ p_j , 1\}$ and $s = 1$ in the case of a smooth boundary or a convex polyhedral domain.
Details may be found in~\cite{AOB01} and~\cite{SF73}.


\section*{Acknowledgments}
   \label{sec:ack}

MH was supported in part by NSF Awards 0715146 and 0915220
and  DOD/DTRA Award HDTRA-09-1-0036.
SP was supported in part by NSF Award~0715146.

\bibliographystyle{abbrv}
\bibliography{refs}
\end{document}